\newtheorem{theorem}{Theorem}
\newtheorem{corollary}[theorem]{Corollary}
\newtheorem{lemma}[theorem]{Lemma}
\newtheorem{proposition}[theorem]{Proposition}
\newtheorem{remark}[theorem]{Remark}
\numberwithin{theorem}{section}
\numberwithin{equation}{section}
\title{The Conway-Maxwell-Poisson distribution:\\ distributional theory and
approximation}
\author{Fraser Daly\footnote{Department of Actuarial Mathematics and Statistics and the Maxwell Institute for Mathematical Sciences, Heriot-Watt University, Edinburgh EH14 4AS, UK.  E-mail: f.daly@hw.ac.uk;  Tel: +44 (0)131 451 3212; Fax: +44 (0)131 451 3249}\; and Robert E$.$ Gaunt\footnote{Department of Statistics, University of Oxford, 24-29 St$.$ Giles', Oxford OX1 3LB, UK. E-mail: gaunt@stats.ox.ac.uk; Tel: +44 (0)1865 281279; Fax: +44 (0)1865 281333. }}
\date{May 2016}
\begin{document}

\maketitle

\noindent{\bf Abstract} The Conway-Maxwell-Poisson (CMP) distribution is a natural two-parameter generalisation of the Poisson distribution which has received some attention in the statistics literature in recent years by offering flexible generalisations of some well-known models.  In this work, we begin by establishing some properties of both the CMP distribution and an analogous generalisation of the binomial distribution, which we refer to as the CMB distribution.  We also consider some convergence results and approximations, including a bound on the total variation distance between a CMB distribution and the corresponding CMP limit.
\vspace{12pt}

\noindent{\bf Key words and phrases:} Conway-Maxwell-Poisson distribution; distributional theory; Stein's method; stochastic ordering; distributional transforms; CMB distribution.

\vspace{12pt}

\noindent{\bf AMS 2010 subject classification:} 60E05; 60E15; 60F05; 62E10.

\section{Introduction}\label{sec:intro}

A two-parameter generalisation of the Poisson distribution was introduced by Conway and Maxwell \cite{cm62} as the stationary number of occupants of a queuing system with state dependent service or arrival rates.  This distribution has since become known as the Conway-Maxwell-Poisson (CMP) distribution.  Beginning with the work of Boatwright, Borle and Kadane \cite{bbk03} and Shmueli et al$.$ \cite{s05}, the CMP distribution has received recent attention in the statistics literature on account of the flexibility it offers in statistical models.  For example, the CMP distribution can model data which is either under- or over-dispersed relative to the Poisson distribution.  This property is exploited by Sellers and Shmueli \cite{ss10}, who use the CMP distribution to generalise the Poisson and logistic regression models.  Kadane et al$.$ \cite{k06} considered the use of the CMP distribution in Bayesian analysis, and Wu, Holan and Wilkie \cite{whw13} use the CMP distribution as part of a Bayesian model for spatio-temporal data.  The CMP distribution is employed in a flexible cure rate model formulated by Rodrigues et al$.$ \cite{r09} and further analysed by Balakrishnan and Pal \cite{bp12}.  

Our purpose in this work is twofold.  Motivated by the use of the CMP distribution in the statistical literature, we firstly aim (in Section \ref{sec:prop}) to derive explicit distributional properties of the CMP distribution and an analogous generalisation of the binomial distribution, the CMB distribution.  Our second aim is to consider the CMP distribution as a limiting distribution.  We give conditions under which sums of dependent Bernoulli random variables will converge in distribution to a CMP random variable, and give an explicit bound in total variation distance between the CMB distribution and the corresponding CMP limit.  These convergence results are detailed in Sections \ref{sec:stein} and \ref{sec:conv}.

We use the remainder of this section to introduce the CMP and CMB distributions and collect some straightforward properties which will prove useful in the sequel.  We also introduce some further definitions that we will need in the work that follows.

\subsection{The CMP distribution}\label{subsec:cmp_def}

The CMP distribution is a natural two-parameter generalisation of the Poisson distribution.  We will write $X\sim\mbox{CMP}(\lambda,\nu)$ if
\begin{equation}\label{admpdf}
\mathbb{P}(X=j)=\frac{1}{Z(\lambda,\nu)}\frac{\lambda^j}{(j!)^\nu}\,,\qquad j\in\mathbb{Z}^+=\{0,1,2,\ldots\}\,,
\end{equation}   
where $Z(\lambda,\nu)$ is a normalizing constant defined by
$$
Z(\lambda,\nu)=\sum_{i=0}^\infty\frac{\lambda^i}{(i!)^\nu}\,.
$$
The domain of admissible parameters for which (\ref{admpdf}) defines a probability distribution is $\lambda,\nu>0$, and $0<\lambda<1$, $\nu=0$.

The introduction of the second parameter $\nu$ allows for either sub- or super-linear growth of the ratio $\mathbb{P}(X=j-1)/\mathbb{P}(X=j)$, and allows $X$ to have variance either less than or greater than its mean.  Of course, the mean of $X\sim\mbox{CMP}(\lambda,\nu)$ is not, in general, $\lambda$.  In Section \ref{sec:prop} we will consider further distributional properties of the CMP distribution, including expressions for its moments.

Clearly, in the case where $\nu=1$, $X\sim\mbox{CMP}(\lambda,1)$ has the Poisson distribution $\mbox{Po}(\lambda)$ and the normalizing constant $Z(\lambda,1)=e^\lambda$.  As noted by Shmueli et al$.$ \cite{s05}, other choices of $\nu$ also give rise to well-known distributions.  For example, in the case where $\nu=0$ and $0<\lambda<1$, $X$ has a geometric distribution, with $Z(\lambda,0)=(1-\lambda)^{-1}$.  In the limit $\nu\rightarrow\infty$, $X$ converges in distribution to a Bernoulli random variable with mean $\lambda(1+\lambda)^{-1}$ and $\lim_{\nu\rightarrow\infty}Z(\lambda,\nu)=1+\lambda$.

In general, of course, the normalizing constant $Z(\lambda,\nu)$ does not permit such a neat, closed-form expression.  Asymptotic results are available, however.  Gillispie and Green \cite{gg14} prove that, for fixed $\nu$,
\begin{equation}\label{eq:norm}
Z(\lambda,\nu)\sim\frac{\exp\left\{\nu\lambda^{1/\nu}\right\}}{\lambda^{(\nu-1)/2\nu}(2\pi)^{(\nu-1)/2}\sqrt{\nu}}\left(1+O\left(\lambda^{-1/\nu}\right)\right)\,,
\end{equation}
as $\lambda\rightarrow\infty$, confirming a conjecture made by Shmueli et al$.$ \cite{s05}.  This asymptotic result may also be used to obtain asymptotic results for the probability generating function of $X\sim\mbox{CMP}(\lambda,\nu)$, since it may be easily seen that
\begin{equation}\label{eq:pgf}
\mathbb{E}s^X=\frac{Z(s\lambda,\nu)}{Z(\lambda,\nu)}\,.
\end{equation}  

\subsection{The CMB distribution}

Just as the CMP distribution arises naturally as a generalisation of the Poisson distribution, we may define an analogous generalisation of the binomial distribution.  We refer to this as the Conway-Maxwell-binomial (CMB) distribution and write that $Y\sim\mbox{CMB}(n,p,\nu)$ if 
$$
\mathbb{P}(Y=j)=\frac{1}{C_n}\binom{n}{j}^\nu p^j(1-p)^{n-j}\,,\qquad j\in\{0,1,\ldots,n\}\,,
$$
where $n\in\mathbb{N}=\{1,2,\ldots\}$, $0\leq p\leq1$ and $\nu\geq0$.  The normalizing constant $C_n$ is defined by
$$
C_n=\sum_{i=0}^n\binom{n}{i}^\nu p^i(1-p)^{n-i}\,.
$$
The dependence of $C_n$ on $p$ and $\nu$ is suppressed for notational convenience.  Of course, the case $\nu=1$ is the usual binomial distribution $Y\sim\mbox{Bin}(n,p)$, with normalizing constant $C_n=1$.  Shmueli et al$.$ \cite{s05} considered the CMB distribution and derived some of its basic properties, referring to it as the CMP-binomial distribution.  We, however, consider it more natural to refer to this as the CMB distribution (a similar convention is also followed by Kadane \cite{k16}); we shall also later refer to an analogous generalisation of the Poisson binomial distribution as the CMPB distribution.   

There is a simple relationship between CMP and CMB random variables, which generalises a well-known result concerning Poisson and binomial random variables.  If $X_1\sim \mbox{CMP}(\lambda_1,\nu)$ and $X_2\sim \mbox{CMP}(\lambda_2,\nu)$ are independent, then $X_1\,|\,X_1+X_2=n\sim\mbox{CMB}(n,\lambda_1/(\lambda_1+\lambda_2),\nu)$ (see \cite{s05}).

It was also noted by \cite{s05} that $Y\sim\mbox{CMB}(n,p,\nu)$ may be written as a sum of exchangeable Bernoulli random variables $Z_1,\ldots,Z_n$ satisfying
\begin{equation}\label{eq:ex}
\mathbb{P}(Z_1=z_1,\ldots,Z_n=z_n)=\frac{1}{C_n}\binom{n}{k}^{\nu-1}p^k(1-p)^{n-k}\,,
\end{equation}
where $k=z_1+\cdots+z_n$.  Note that $\mathbb{E}Z_1\not=p$ in general, unless $\nu=1$.  However, $\mathbb{E}Z_1=n^{-1}\mathbb{E}Y$ may be either calculated explicitly or estimated using some of the properties of the CMB distribution to be discussed in the sequel.

From the mass functions given above, it can be seen that if $Y\sim\mbox{CMB}(n,\lambda/n^\nu,\nu)$, then $Y$ converges in distribution to $X\sim\mbox{CMP}(\lambda,\nu)$ as $n\rightarrow\infty$.  We return to this convergence in Section \ref{sec:stein} below, where we give an explicit bound on the total variation distance between these distributions.

\subsection{Power-biasing}\label{subsec:powerbias}

In what follows, we will need the definition of power-biasing, as used by Pek\"oz, R\"ollin and Ross \cite{prr14}.  For any non-negative random variable $W$ with finite $\nu$-th moment, we say that $W^{(\nu)}$ has the $\nu$-power-biased distribution of $W$ if
\begin{equation}\label{eq:powerbias}
(\mathbb{E}W^\nu)\mathbb{E}f(W^{(\nu)})=\mathbb{E}\left[W^\nu f(W)\right]\,,
\end{equation}
for all $f:\mathbb{R}^+\mapsto\mathbb{R}$ such that the expectations exist.  In this paper, we will be interested in the case that $W$ is non-negative and integer-valued.  In this case, the mass function of $W^{(\nu)}$ is given by
$$
\mathbb{P}(W^{(\nu)}=j)=\frac{j^\nu\mathbb{P}(W=j)}{\mathbb{E}W^\nu}\,,\qquad j\in\mathbb{Z}^+\,.
$$
Properties of a large family of such transformations, of which power-biasing is a part, are discussed by Goldstein and Reinert \cite{gr05}.  The case $\nu=1$ is the usual size-biasing, which has often previously been employed in conjunction with the Poisson distribution: see Barbour, Holst and Janson \cite{bhj92}, Daly, Lef\`evre and Utev \cite{dlu12}, Daly and Johnson \cite{dj13}, and references therein for some examples.  The power-biasing we employ here is the natural generalisation of size-biasing that may be applied in the CMP case.   

\section{Distributional properties of the CMP and CMB distributions} \label{sec:prop}

In this section we collect some distributional properties of the CMP and CMB distributions.  Some will be required in the sequel when considering approximations and convergence to the CMP distribution, and all of are of some interest, either independently or for statistical applications.

\subsection{Moments, cumulants, and related results}

We begin this section by noting, in Proposition \ref{prop:moments} below, that some moments of the CMP distribution may be easily and explicitly calculated. The simple formula $\mathbb{E}X^\nu=\lambda$ was already known to Sellers and Shmueli \cite{ss10}.  We also note the corresponding result for the CMB distribution.  

Here and in the sequel we let 
$$
(j)_r=j(j-1)\cdots(j-r+1)
$$
denote the falling factorial.
\begin{proposition} \label{prop:moments}
\begin{enumerate}
\item[(i).] Let $X\sim\mbox{CMP}(\lambda,\nu)$, where $\lambda,\nu>0$.  Then
$$
\mathbb{E}[((X)_r)^\nu]=\lambda^r\,,
$$
for $r\in\mathbb{N}$.
\item[(ii).] Let $Y\sim\mbox{CMB}(n,p,\nu)$, where $\nu>0$.  Then
$$
\mathbb{E}[((Y)_r)^\nu]=\frac{C_{n-r}}{C_n}((n)_r)^\nu p^r\,,
$$
for $r=1,\ldots,n-1$.
\end{enumerate}
\end{proposition}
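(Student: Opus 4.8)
The plan is to prove both identities by a direct computation from the respective mass functions: in each case the factorial powers produced by the falling factorial cancel against the factorial powers in the denominator, leaving behind a rescaled copy of the original normalising constant.

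For part (i), I would start from
$$\mathbb{E}[((X)_r)^\nu]=\frac{1}{Z(\lambda,\nu)}\sum_{j=0}^\infty((j)_r)^\nu\frac{\lambda^j}{(j!)^\nu}\,.$$
Since $\nu>0$ and $(j)_r=0$ for $0\le j<r$, only the terms with $j\ge r$ contribute, and for such $j$ we have $(j)_r=j!/(j-r)!$, so that $((j)_r)^\nu/(j!)^\nu=1/((j-r)!)^\nu$. Re-indexing the sum by $i=j-r$ then gives $\mathbb{E}[((X)_r)^\nu]=\lambda^r Z(\lambda,\nu)^{-1}\sum_{i=0}^\infty\lambda^i/(i!)^\nu=\lambda^r$. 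The manipulation is justified because all the series converge absolutely: $\mbox{CMP}(\lambda,\nu)$ has finite moments of every order for $\nu>0$, as one sees from the ratio test on $\sum_{j\ge r}\lambda^j/((j-r)!)^\nu$.

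For part (ii) the argument is the same in spirit, with one extra algebraic step. Writing
$$\mathbb{E}[((Y)_r)^\nu]=\frac{1}{C_n}\sum_{j=r}^n((j)_r)^\nu\binom{n}{j}^\nu p^j(1-p)^{n-j}\,,$$
where again only $j\ge r$ contributes, the key is to simplify $(j)_r\binom{n}{j}$ for $r\le j\le n$. Expanding both factors in terms of factorials,
$$(j)_r\binom{n}{j}=\frac{n!}{(j-r)!\,(n-j)!}=(n)_r\binom{n-r}{j-r}\,.$$
Raising this to the power $\nu$, substituting back, and changing index to $i=j-r$ turns the expectation into
$$\mathbb{E}[((Y)_r)^\nu]=\frac{((n)_r)^\nu p^r}{C_n}\sum_{i=0}^{n-r}\binom{n-r}{i}^\nu p^i(1-p)^{(n-r)-i}\,,$$
and the remaining sum is precisely the normalising constant $C_{n-r}$ of the $\mbox{CMB}(n-r,p,\nu)$ distribution, giving the claimed formula. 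The restriction $r\le n-1$ guarantees $n-r\ge1$, so that $C_{n-r}$ is a genuine CMB normalising constant.

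There is no substantive obstacle here; the only points requiring any care are the binomial-coefficient identity in part (ii) and keeping careful track of the index ranges so that the residual sums can be recognised as the appropriate normalising constants.
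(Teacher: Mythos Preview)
Your proof is correct and follows essentially the same approach as the paper: both parts are proved by direct computation from the mass function, using the cancellation $((j)_r)^\nu/(j!)^\nu=1/((j-r)!)^\nu$ in part (i) and the identity $(j)_r\binom{n}{j}=(n)_r\binom{n-r}{j-r}$ in part (ii), followed by an index shift that reproduces the relevant normalising constant. The paper's proof is slightly terser but the argument is identical in substance.
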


\begin{proof}We have
\begin{align*}\mathbb{E}[((X)_r)^\nu]&=\frac{1}{Z(\lambda,\nu)}\sum_{k=0}^\infty ((k)_r)^\nu\frac{\lambda^k}{(k!)^\nu}=\frac{\lambda^r}{Z(\lambda,\nu)}\sum_{k=r}^\infty \frac{\lambda^{k-r}}{((k-r)!)^\nu}\\&=\frac{\lambda^r}{Z(\lambda,\nu)}\sum_{j=0}^\infty \frac{\lambda^{j}}{(j!)^\nu}=\lambda^r\, ,
\end{align*}
and
\begin{align*}\mathbb{E}[((Y)_r)^\nu]&=\frac{1}{C_n}\sum_{k=0}^n((k)_r)^\nu \binom{n}{k}^\nu p^k(1-p)^{n-k}\\
&=\frac{1}{C_n}\bigg(\frac{n!}{(n-r)!}\bigg)^\nu \sum_{k=r}^n\binom{n-r}{k-r}^\nu p^k(1-p)^{n-k} \\
&=\frac{1}{C_n}((n)_r)^\nu p^r \sum_{j=0}^{n-r}\binom{n-r}{j}^\nu p^j(1-p)^{n-r-j}=\frac{C_{n-r}}{C_n}((n)_r)^\nu p^r\,.
\end{align*}
\end{proof}

\begin{remark}\emph{It is well-known that the factorial moments of $Z\sim\mathrm{Po}(\lambda)$ are given by $\mathbb{E}[(X)_r]=\lambda^r$.  We therefore have the attractive formula $\mathbb{E}[((X)_r)^\nu]=\mathbb{E}[(Z)_r]$, for $X\sim\mbox{CMP}(\lambda,\nu)$.}
\end{remark}

Such simple expressions do not exist for moments of $X\sim\mbox{CMP}(\lambda,\nu)$ which are not of the form $\mathbb{E}[((X)_r)^\nu]$.  Instead, we use (\ref{eq:norm}) to give asymptotic expressions for such moments. 

\begin{proposition}\label{prop:moments2}
Let $X\sim\mbox{CMP}(\lambda,\nu)$.  Then, for $k\in\mathbb{N}$,
$$
\mathbb{E}X^k\sim\lambda^{k/\nu}\left(1+O\left(\lambda^{-1/\nu}\right)\right)\,,
$$
as $\lambda\rightarrow\infty$.
\end{proposition}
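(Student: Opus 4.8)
The plan is to reduce the claim to the asymptotics of the factorial moments $\mathbb{E}[(X)_r]$ and then to extract these from (\ref{eq:norm}) via the probability generating function identity (\ref{eq:pgf}). Using the Stirling numbers of the second kind $S(k,r)$, for which $x^k=\sum_{r=0}^{k}S(k,r)(x)_r$ and $S(k,k)=1$, we have
\[
\mathbb{E}X^k=\sum_{r=1}^{k}S(k,r)\,\mathbb{E}[(X)_r]\,,
\]
so it suffices to prove that $\mathbb{E}[(X)_r]=\lambda^{r/\nu}\bigl(1+O(\lambda^{-1/\nu})\bigr)$ as $\lambda\to\infty$ for each fixed $r\in\mathbb{N}$. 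Indeed, the $r=k$ term then supplies the leading term $\lambda^{k/\nu}$, while every other term is $O\bigl(\lambda^{(k-1)/\nu}\bigr)=O\bigl(\lambda^{k/\nu}\lambda^{-1/\nu}\bigr)$ and so is absorbed into the error term.

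To obtain the factorial moments I would differentiate (\ref{eq:pgf}), $\mathbb{E}s^X=Z(s\lambda,\nu)/Z(\lambda,\nu)$, $r$ times in $s$ and set $s=1$; writing $Z^{(r)}(\lambda,\nu)$ for the $r$th derivative of $Z$ in its first argument and using the chain rule, this gives the clean identity
\[
\mathbb{E}[(X)_r]=\lambda^r\,\frac{Z^{(r)}(\lambda,\nu)}{Z(\lambda,\nu)}\,.
\]
Everything then reduces to showing $Z^{(r)}(\lambda,\nu)/Z(\lambda,\nu)\sim\lambda^{r(1-\nu)/\nu}$. Formally this follows from (\ref{eq:norm}): since $\log Z(\lambda,\nu)=\nu\lambda^{1/\nu}-\tfrac{\nu-1}{2\nu}\log\lambda+O(1)+O(\lambda^{-1/\nu})$, differentiating term by term gives $Z'/Z\sim\lambda^{1/\nu-1}$, and an induction on $r$ then yields $Z^{(r)}/Z\sim\lambda^{r(1-\nu)/\nu}$ (at each step the contribution from differentiating the exponential factor dominates, the two competing powers of $\lambda$ differing by a factor $\lambda^{1/\nu}$). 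Substituting back gives $\mathbb{E}[(X)_r]\sim\lambda^r\lambda^{r(1-\nu)/\nu}=\lambda^{r/\nu}$, and the $O(\lambda^{-1/\nu})$ error survives because the correction terms in $\log Z$ are of relative order $\lambda^{-1/\nu}$.

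The genuinely delicate step is justifying the differentiation of the asymptotic relation (\ref{eq:norm}), as an asymptotic equivalence need not persist under differentiation. I would settle this in one of three ways: (i) note that the Laplace-type analysis underlying (\ref{eq:norm}) in fact produces a Poincar\'e asymptotic expansion of $Z(\lambda,\nu)$ in descending powers of $\lambda^{1/\nu}$, valid in a sector around the positive real axis and hence differentiable termwise; (ii) apply Laplace's method directly to $Z^{(r)}(\lambda,\nu)=\sum_{k\geq r}(k)_r\lambda^{k-r}/(k!)^\nu$, whose summand peaks at $k\asymp\lambda^{1/\nu}$ exactly as for $Z(\lambda,\nu)$, the slowly varying factor $(k)_r$ contributing $\lambda^{r/\nu}$ at the peak up to a relative error $O(\lambda^{-1/\nu})$; or (iii) represent $Z^{(r)}(\lambda,\nu)$ by Cauchy's integral formula on the circle $|w-\lambda|=\lambda^{1-1/\nu}$ and insert the local expansion $\log Z(w,\nu)=\nu\lambda^{1/\nu}-\tfrac{\nu-1}{2\nu}\log\lambda+\lambda^{1/\nu-1}(w-\lambda)+O\bigl(\lambda^{1/\nu-2}(w-\lambda)^2\bigr)$, on which the linear term is $O(1)$ and the quadratic remainder is $O(\lambda^{-1/\nu})$. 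Any of these delivers both the leading constant $1$ and the stated error, and the remaining bookkeeping is routine. (Alternatively, one can bypass the derivatives of $Z$ altogether by combining Proposition~\ref{prop:moments}, which gives $\mathbb{E}[((X)_r)^\nu]=\lambda^r$ exactly, with the concentration of $X$ around $\lambda^{1/\nu}$---itself a consequence of (\ref{eq:norm})---through a Taylor expansion of $x\mapsto x^{1/\nu}$; this again isolates the same analytic estimate as the main obstacle.)
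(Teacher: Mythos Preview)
Your proof is correct and follows essentially the same route as the paper: reduce $\mathbb{E}X^k$ to factorial moments via Stirling numbers of the second kind (with $S(k,k)=1$ supplying the leading term), express $\mathbb{E}[(X)_r]=\lambda^r Z^{(r)}(\lambda,\nu)/Z(\lambda,\nu)$, and then differentiate the asymptotic (\ref{eq:norm}). The paper handles the justification of termwise differentiation exactly along the lines of your option (i), by invoking the full Poincar\'e expansion of $Z(\lambda,\nu)$ (their (\ref{eq:norm1})) that comes out of the Laplace analysis; your alternatives (ii) and (iii) would also work but are not needed.
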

\begin{proof}
It is clear that, for $k\in\mathbb{N}$,
\begin{equation*}
\mathbb{E}[(X)_k]=\frac{\lambda^k}{Z(\lambda,\nu)}\frac{\partial^k}{\partial \lambda^k}Z(\lambda,\nu)\,.
\end{equation*}
Differentiating (\ref{eq:norm}) (see Remark \ref{difasy} for a justification) we have that 
\begin{equation}\label{ggexpan}
\frac{\partial^k}{\partial \lambda^k}Z(\lambda,\nu)\sim \lambda^{k/\nu-k}\cdot\frac{\exp\left\{\nu\lambda^{1/\nu}\right\}}{\lambda^{(\nu-1)/2\nu}(2\pi)^{(\nu-1)/2}\sqrt{\nu}}\left(1+O\left(\lambda^{-1/\nu}\right)\right)\,,
\end{equation}
as $\lambda\rightarrow\infty$, and hence
$$
\mathbb{E}[(X)_k]\sim\lambda^{k/\nu}\left(1+O\left(\lambda^{-1/\nu}\right)\right)\,,
$$
as $\lambda\rightarrow\infty$.  We now exploit the following connection between moments and factorial moments:
\begin{equation}\label{eq:stirling}
\mathbb{E}X^k=\sum_{r=1}^k{k\brace r}\mathbb{E}[(X)_r]\,, 
\end{equation}
for $k\in\mathbb{N}$, where the Stirling numbers of the second kind ${k\brace r}$ are given by ${k\brace r}=\frac{1}{r!}\sum_{j=0}^r(-1)^{r-j}\binom{r}{j}j^k$ (see Olver et al$.$ \cite{o10}).  Using (\ref{eq:stirling}), and noting that ${k\brace k}=1$, completes the proof.
\end{proof}

\begin{remark}\label{difasy}\emph{In the above proof, we differentiated the asymptotic formula (\ref{eq:norm}) in the naive sense by simply differentiating the leading term $k$ times.  We shall also do this below in deriving the variance formula (\ref{eq:var}), and in Proposition \ref{kappaprop}, in which we differentiate an asymptotic series for $\log(Z(\lambda e^{t},\nu))$ with respect to $t$ in an analogous manner.  However, as noted by Hinch \cite{h91}, p$.$ 23, asymptotic approximations cannot be differentiated in this manner in general.  Fortunately, in the case of the asymptotic expansion (\ref{eq:norm}) for $Z(\lambda,\nu)$ we can do so.  This is because we have the following asymptotic formula for the CMP normalising constant that is more precise than (\ref{eq:norm}).  For fixed $\nu$,
\begin{equation}\label{eq:norm1}
Z(\lambda,\nu)\sim\frac{\exp\left\{\nu\lambda^{1/\nu}\right\}}{\lambda^{(\nu-1)/2\nu}(2\pi)^{(\nu-1)/2}\sqrt{\nu}}\bigg(1+\sum_{k=1}^\infty a_k\lambda^{-k/\nu}\bigg)\,,
\end{equation}
as $\lambda\rightarrow\infty$, where the $a_k$ are constants that do not involve $\lambda$.  The $m$-th derivative of the asymptotic series (\ref{eq:norm1}) is dominated by the $m$-th derivative of the leading term of (\ref{eq:norm1}), meaning that one can naively differentiate the asymptotic series, as we did in the proof of Proposition \ref{prop:moments2}. } 

\emph{The leading term in the asymptotic expansion (\ref{eq:norm1}) was obtained for integer $\nu$ by Shmueli et al$.$ \cite{s05}, and then for all $\nu>0$ by Gillispie and Green \cite{gg14}.  When stating their results, \cite{s05} and \cite{gg14} did not include the lower order term $\sum_{k=1}^\infty a_k\lambda^{-k/\nu}$, but it can be easily read off from their analysis.  For integer $\nu$, \cite{s05} gave an integral representation for $Z(\lambda,\nu)$ and then applied Laplace's approximation to write down the leading order term in its asymptotic expansion.  Laplace's approximation gives that (see Shun and McCullagh \cite{sm95}, p$.$ 750), for infinitely differentiable $g:\mathbb{R}^d\rightarrow\mathbb{R}$, 
\begin{equation*}\int_{\mathbb{R}^d}\exp\{-ng(x)\}\,dx\sim\bigg(\frac{n\det(\hat{g}'')}{2\pi}\bigg)^{-1/2}\exp\{-n\hat{g}\}\bigg(1+\sum_{k=1}^\infty b_k n^{-k}\bigg)\,, \quad \text{as } n\rightarrow\infty\,,
\end{equation*} 
where the $b_k$ do not involve $n$, and $\hat{g}$ and $\hat{g}''$ denote $g$ and the matrix of second order derivatives of $g$, respectively, evaluated at the value $\hat{x}$ that minimises $g$.  It is now clear that the lower order term in (\ref{eq:norm1}) has the form $\sum_{k=1}^\infty a_k\lambda^{-k/\nu}$.  For general $\nu>0$, \cite{gg14} obtained an expression for the leading term in the asymptotic expansion of $Z(\lambda,\nu)$ by using Laplace's approximation, as well as several other simpler asymptotic approximations.  In each of these approximations, the lower order term is of the form $\sum_{k=1}^\infty c_k\lambda^{-k/\nu}$, from which it follows that $Z(\lambda,\nu)$ has an asymptotic expansion of the form (\ref{eq:norm1}). }
\end{remark}

We also have the following relationship between moments of $X\sim\mbox{CMP}(\lambda,\nu)$:
$$
\mathbb{E}X^{r+1}=\lambda\frac{d}{d\,\lambda}\mathbb{E}X^r+\mathbb{E}X\mathbb{E}X^r\,,
$$
for $r>0$.  See equation (6) of Shmueli et al$.$ \cite{s05}.  With $r=1$ we obtain 
\begin{equation}\label{eq:var}
\mbox{Var}(X)=\lambda\frac{d}{d\,\lambda}\mathbb{E}X\sim\frac{1}{\nu}\lambda^{1/\nu}+O(1)\,,
\end{equation}  
as $\lambda\rightarrow\infty$, from Proposition \ref{prop:moments2}.  This also gives the following corollary.
\begin{corollary}
Let $m$ be the median of $X\sim\mbox{CMP}(\lambda,\nu)$.  Then
$$
m\sim\lambda^{1/\nu}+O\left(\lambda^{1/2\nu}\right)\,,
$$
as $\lambda\rightarrow\infty$.
\end{corollary}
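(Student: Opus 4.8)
The plan is to combine the elementary fact that any median lies within one standard deviation of the mean with the asymptotics for $\mathbb{E}X$ and $\mbox{Var}(X)$ already established. Recall that the median of a random variable minimises its mean absolute deviation; hence, for $X\sim\mbox{CMP}(\lambda,\nu)$ with median $m$, and using that $X$ has a finite second moment,
$$
|m-\mathbb{E}X|=|\mathbb{E}[X-m]|\leq\mathbb{E}|X-m|\leq\mathbb{E}|X-\mathbb{E}X|\leq\sqrt{\mbox{Var}(X)}\,,
$$
where the first inequality is Jensen's, the second uses the minimising property of the median, and the last is the Cauchy--Schwarz inequality.

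Next I would substitute the asymptotics. By Proposition \ref{prop:moments2} with $k=1$ we have $\mathbb{E}X\sim\lambda^{1/\nu}(1+O(\lambda^{-1/\nu}))$, that is, $\mathbb{E}X=\lambda^{1/\nu}+O(1)$ as $\lambda\rightarrow\infty$. By (\ref{eq:var}), $\mbox{Var}(X)\sim\frac1\nu\lambda^{1/\nu}+O(1)$, so $\sqrt{\mbox{Var}(X)}=O(\lambda^{1/2\nu})$. Feeding these into the displayed bound yields
$$
m=\mathbb{E}X+O\big(\sqrt{\mbox{Var}(X)}\big)=\lambda^{1/\nu}+O(1)+O\big(\lambda^{1/2\nu}\big)=\lambda^{1/\nu}+O\big(\lambda^{1/2\nu}\big)\,,
$$
since $\lambda^{1/2\nu}\rightarrow\infty$ absorbs the $O(1)$ term, which is exactly the claimed asymptotic.

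There is essentially no serious obstacle here; the only points needing care are that the error terms in the asymptotics for $\mathbb{E}X$ and $\mbox{Var}(X)$ are genuinely $O(1)$ (which follows from Proposition \ref{prop:moments2}, (\ref{eq:var}), and the justification for naively differentiating (\ref{eq:norm}) given in Remark \ref{difasy}), and the validity of the mean-absolute-deviation bound. A more computational alternative would be to estimate the tail probabilities $\mathbb{P}(X\leq j)$ directly from the asymptotic formula (\ref{eq:norm}) together with (\ref{eq:pgf}), but this appears unnecessarily involved compared with the short argument above.
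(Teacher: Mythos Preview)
Your proof is correct and follows essentially the same route as the paper: both combine the inequality $|m-\mathbb{E}X|\leq\sqrt{\mbox{Var}(X)}$ with the asymptotics $\mathbb{E}X=\lambda^{1/\nu}+O(1)$ and $\mbox{Var}(X)\sim\frac{1}{\nu}\lambda^{1/\nu}$. The only cosmetic difference is that the paper cites Mallows \cite{m91} for the mean--median--standard deviation inequality, whereas you supply the short self-contained argument via the minimising property of the median and Cauchy--Schwarz.
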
 
\begin{proof}
From above, $\mathbb{E}X\sim\lambda^{1/\nu}+O(1)$ and $\sigma=\sqrt{\mbox{Var}(X)}\sim\frac{1}{\sqrt{\nu}}\lambda^{1/2\nu}+O\left(1\right)$.  Since $\sigma<\infty$, we may use a result of Mallows \cite{m91}, who showed that $|\mathbb{E}X-m|\leq\sigma$.  The result follows.
\end{proof}

As with the moments above, we may also find asymptotic expressions for the cumulants of the CMP distribution. 
\begin{proposition}\label{kappaprop}
For $n\geq1$, let $\kappa_n$ be the $n$th cumulant of $X\sim\mbox{CMP}(\lambda,\nu)$.  Then
$$
\kappa_n\sim\frac{1}{\nu^{n-1}}\lambda^{1/\nu}+O(1)\,,  
$$
as $\lambda\rightarrow\infty$.
\end{proposition}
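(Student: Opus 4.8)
The plan is to read the cumulants off the cumulant generating function. Setting $s=e^t$ in (\ref{eq:pgf}), the cumulant generating function of $X\sim\mbox{CMP}(\lambda,\nu)$ is
$$
K(t)=\log\mathbb{E}e^{tX}=\log Z(\lambda e^t,\nu)-\log Z(\lambda,\nu)\,,
$$
so that $\kappa_n=K^{(n)}(0)$. Everything therefore reduces to an asymptotic expansion of $\log Z(\lambda,\nu)$ as $\lambda\to\infty$, which we then dilate by $e^t$ and subtract.

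Taking logarithms in the refined asymptotic formula (\ref{eq:norm1}) gives, for fixed $\nu$,
$$
\log Z(\lambda,\nu)=\nu\lambda^{1/\nu}-\frac{\nu-1}{2\nu}\log\lambda+c_\nu+\log\Big(1+\sum_{k\geq1}a_k\lambda^{-k/\nu}\Big)\,,
$$
where $c_\nu=-\tfrac{\nu-1}{2}\log(2\pi)-\tfrac12\log\nu$ and the last term is $O(\lambda^{-1/\nu})$. Substituting $\lambda\mapsto\lambda e^t$ (legitimate since $\lambda e^t\to\infty$ uniformly for $t$ in a fixed neighbourhood of $0$, where the expansion holds uniformly) and subtracting, the logarithmic and constant terms cancel up to a term linear in $t$, leaving
$$
K(t)=\nu\lambda^{1/\nu}\big(e^{t/\nu}-1\big)-\frac{\nu-1}{2\nu}\,t+\log\frac{1+\sum_{k\geq1}a_k\lambda^{-k/\nu}e^{-kt/\nu}}{1+\sum_{k\geq1}a_k\lambda^{-k/\nu}}\,.
$$
Differentiating $n$ times and evaluating at $t=0$: the first term contributes $\lambda^{1/\nu}/\nu^{n-1}$; the linear term contributes $-\tfrac{\nu-1}{2\nu}$ if $n=1$ and $0$ if $n\geq2$, hence $O(1)$ in all cases; and the remainder, which has an asymptotic expansion in $\lambda^{-1/\nu}$ whose coefficients are smooth, bounded functions of $t$ near $0$, contributes a term of order $O(\lambda^{-1/\nu})$. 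Summing these gives $\kappa_n\sim\nu^{-(n-1)}\lambda^{1/\nu}+O(1)$.

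The only genuine obstacle is justifying the term-by-term $t$-differentiation of the asymptotic expansion, since asymptotic series cannot in general be differentiated (cf. Remark \ref{difasy}). But the justification already given there for differentiating (\ref{eq:norm1}) in $\lambda$ applies verbatim: by the chain rule $\partial_t Z(\lambda e^t,\nu)=\lambda e^t\,\partial_\lambda Z(\lambda e^t,\nu)$, so the $m$th $t$-derivative of each term in the expansion of $\log Z(\lambda e^t,\nu)$ remains dominated by the $m$th derivative of its leading term. Granting this, the computation above is routine. As a consistency check, $n=1$ recovers $\mathbb{E}X\sim\lambda^{1/\nu}+O(1)$ (Proposition \ref{prop:moments2} with $k=1$) and $n=2$ recovers the variance asymptotic (\ref{eq:var}).
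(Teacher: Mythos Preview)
Your proof is correct and follows essentially the same approach as the paper: both compute $\kappa_n$ as $\partial_t^n\log Z(\lambda e^t,\nu)\big|_{t=0}$, expand $\log Z$ via (\ref{eq:norm})/(\ref{eq:norm1}), and invoke Remark~\ref{difasy} to justify termwise differentiation. The only difference is that the paper merely asserts the second term is $O(1)$ (``a straightforward analysis, which is omitted''), whereas you actually carry out that analysis by tracking the $-\tfrac{\nu-1}{2\nu}t$ contribution from the prefactor in (\ref{eq:norm1}).
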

\begin{proof}
From (\ref{eq:pgf}), the cumulant generating function of $X\sim\mbox{CMP}(\lambda,\nu)$ is
$$
g(t)=\log(\mathbb{E}[e^{tX}])=\log(Z(\lambda e^{t},\nu))-\log(Z(\lambda,\nu)).
$$
The cumulants are given by
$$
\kappa_n=g^{(n)}(0)=\frac{\partial^n}{\partial t^n}\log(Z(\lambda e^{t},\nu))\bigg|_{t=0}.
$$
From (\ref{eq:norm}),
$$
\log(Z(\lambda e^{t},\nu))\sim \nu\lambda^{1/\nu}e^{t/\nu}\,, 
$$
as $\lambda\rightarrow\infty$.  The expression for the leading term in the asymptotic expansion of $\kappa_n$ now easily follows, and a straightforward analysis, which is omitted, shows that the second term is $O(1)$ for all $n\geq1$.  The result now follows.
\end{proof}

Note that as a corollary to this result, the skewness $\gamma_1$ of $X\sim\mbox{CMP}(\lambda,\nu)$ satisfies
$$
\gamma_1=\frac{\kappa_3}{\sigma^3}\sim\frac{1}{\sqrt{\nu}}\lambda^{-1/2\nu}+O\left(\lambda^{-3/2\nu}\right)\,,
$$
as $\lambda\rightarrow\infty$, where $\sigma^2=\mbox{Var}(X)\sim\frac{1}{\nu}\lambda^{1/\nu}$ from (\ref{eq:var}).  Similarly, the excess kurtosis $\gamma_2$ of $X$ satisfies
$$
\gamma_2=\frac{\kappa_4}{\sigma^4}\sim\frac{1}{\nu}\lambda^{-1/\nu}+O\left(\lambda^{-2/\nu}\right)\,,
$$
as $\lambda\rightarrow\infty$.  For comparison, recall that in the Poisson case ($\nu=1$), $\gamma_1=\lambda^{-1/2}$ and $\gamma_2=\lambda^{-1}$.

We conclude this section with two further results of a similar flavour.  We begin by giving expressions for the modes of the CMP and CMB distributions. The expression for the mode of CMP distribution for non-integral $\lambda^{1/\nu}$ was known to Guikema and Goffelt \cite{gg08}, but for clarity and completeness we state the result and give the simple proof.  The expression for the mode of the CMB distribution is new.  Here and in the sequel we will let $\lfloor\cdot\rfloor$ denote the floor function.
\begin{proposition}
\begin{enumerate}
\item[(i).]Let $X\sim\mbox{CMP}(\lambda,\nu)$.  Then the mode of $X$ is $\lfloor\lambda^{1/\nu}\rfloor$ if $\lambda^{1/\nu}$ is not an integer.  Otherwise, the modes of $X$ are $\lambda^{1/\nu}$ and $\lambda^{1/\nu}-1$.
\item[(ii).]Let $Y\sim\mbox{CMB}(n,p,\nu)$ and define
$$
a=\frac{n+1}{1+\left(\frac{1-p}{p}\right)^{1/\nu}}\,.
$$
Then the mode of $Y$ is $\lfloor a\rfloor$ if $a$ is not an integer.  Otherwise, the modes of $Y$ are $a$ and $a-1$. 
\end{enumerate}
\end{proposition}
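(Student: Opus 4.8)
\section*{Proof proposal}

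The plan is to use the standard unimodality argument based on the ratio of consecutive point probabilities; both parts are essentially identical once this ratio is computed.

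For (i), I would set $r_j := \mathbb{P}(X=j)/\mathbb{P}(X=j-1)$ for $j\ge 1$ and read off from (\ref{admpdf}) that $r_j = \lambda/j^{\nu}$. Since $\nu>0$, the map $j\mapsto r_j$ is strictly decreasing on $\mathbb{N}$, and $r_j\ge 1$ if and only if $j\le \lambda^{1/\nu}$. Hence $j\mapsto\mathbb{P}(X=j)$ is strictly increasing while $j\le\lambda^{1/\nu}$ and strictly decreasing once $j>\lambda^{1/\nu}$. If $\lambda^{1/\nu}\notin\mathbb{Z}$ this pins down the unique mode $\lfloor\lambda^{1/\nu}\rfloor$; if $\lambda^{1/\nu}=m\in\mathbb{N}$ then additionally $r_m=1$, i.e.\ $\mathbb{P}(X=m-1)=\mathbb{P}(X=m)$, giving the two modes $m-1$ and $m$.

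For (ii), the same scheme applies with $r_j := \mathbb{P}(Y=j)/\mathbb{P}(Y=j-1)$ for $1\le j\le n$. Using $\binom{n}{j}\big/\binom{n}{j-1}=(n-j+1)/j$ gives $r_j=\big((n-j+1)/j\big)^{\nu}\,p/(1-p)$, which (for $0<p<1$, and since $j\mapsto (n-j+1)/j=(n+1)/j-1$ is strictly decreasing) is again strictly decreasing in $j$ on $\{1,\dots,n\}$; a one-line rearrangement of $r_j\ge 1$ shows $r_j\ge1$ if and only if $j\le a$ with $a$ as defined. The dichotomy from part (i) then yields the unique mode $\lfloor a\rfloor$ when $a\notin\mathbb{Z}$ and the pair $a-1,a$ when $a\in\mathbb{Z}$.

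I do not expect a genuine obstacle here: the only care needed is (a) getting the strict-versus-weak inequalities right at the integer boundary case, and (b) confirming the answers lie in the support --- one checks $0\le a\le n+1$ so that $\lfloor a\rfloor\in\{0,\dots,n\}$ after disposing of the degenerate cases $p\in\{0,1\}$ (where $Y$ is a.s.\ constant), and likewise that $\lfloor\lambda^{1/\nu}\rfloor\ge0$ automatically. Everything else is elementary algebra.
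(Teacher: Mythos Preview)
Your proposal is correct and follows essentially the same approach as the paper: both arguments examine the ratio of consecutive point probabilities and use its monotonicity to locate the mode. The only cosmetic difference is that for part (i) the paper rewrites the mass function as a $\nu$-th power of a Poisson-type term and appeals to the known Poisson result, whereas you compute the ratio $r_j=\lambda/j^{\nu}$ directly; these are the same argument.
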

\begin{proof}
\begin{enumerate}
\item[(i).] Writing
$$
\mathbb{P}(X=j)=\frac{1}{Z(\lambda,\nu)}\left(\frac{(\lambda^{1/\nu})^j}{j!}\right)^\nu\,,
$$
the result now follows as in the Poisson case, for which the result is well-known.
\item[(ii).] This is a straightforward generalisation of the derivation of the mode of a binomial distribution given by Kaas and Buhrman \cite{kb80}.  Consideration of the ratio
$$
\frac{\mathbb{P}(Y=k+1)}{\mathbb{P}(Y=k)}=\left(\frac{n-k}{k+1}\right)^\nu \frac{p}{1-p}=\left(\frac{n-k}{k+1}\left(\frac{p}{1-p}\right)^{1/\nu}\right)^\nu\,,
$$
shows that $\mathbb{P}(Y=k)$ increases as a function of $k$ if $k<a$ and decreases for $k>a-1$.  Therefore, if $a$ is not an integer, $\mathbb{P}(Y=k)$ increases for $k\leq\lfloor{a}\rfloor$ and decreases for $k\geq\lfloor{a}\rfloor$, giving $\lfloor{a}\rfloor$ as the mode.  If $a$ is an integer then $\mathbb{P}(Y=k)$ increases for $k\leq a-1$ and decreases for $k\geq a$ and so $a-1$ and $a$ are neighbouring modes. 
\end{enumerate}
\end{proof}

We also give, in Proposition \ref{prop:md} below, an expression for the mean deviation of $X^\nu$, where $X\sim\mbox{CMP}(\lambda,\nu)$, as usual.  This generalises a result of Crow \cite{c58}, who showed that if $Z\sim\mbox{Po}(\lambda)$ has a Poisson distribution, then
$$
\mathbb{E}|Z-\lambda|=2e^{-\lambda}\frac{\lambda^{\lfloor\lambda\rfloor+1}}{\lfloor\lambda\rfloor!}=2\lambda S\,,
$$
where $S=e^{-\lambda}\frac{\lambda^{\lfloor\lambda\rfloor+1}}{\lfloor\lambda\rfloor!}$ is the maximum value of the mass function of $Z$.
\begin{proposition}\label{prop:md}
Let $X\sim\mbox{CMP}(\lambda,\nu)$.  Then
$$
\mathbb{E}|X^\nu-\lambda|=2Z(\lambda,\nu)^{-1}\frac{\lambda^{\lfloor\lambda^{1/\nu}\rfloor+1}}{\lfloor\lambda^{1/\nu}\rfloor!}=2\lambda T\,,
$$
where $T=Z(\lambda,\nu)^{-1}\frac{\lambda^{\lfloor\lambda^{1/\nu}\rfloor+1}}{\lfloor\lambda^{1/\nu}\rfloor!}$, the maximum value of the mass function of $X$.
\end{proposition}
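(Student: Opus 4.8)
The plan is to mimic Crow's argument for the Poisson case, the key being that the CMP mass function telescopes against the factor $j^\nu$. Write $m=\lfloor\lambda^{1/\nu}\rfloor$, so that $T=Z(\lambda,\nu)^{-1}\lambda^{m}/(m!)^\nu=\mathbb{P}(X=m)$, which is the maximum value of the mass function by the expression for the mode of the CMP distribution obtained above. The starting point is Proposition \ref{prop:moments}(i) with $r=1$, which gives $\mathbb{E}X^\nu=\lambda$; hence $X^\nu-\lambda$ has mean zero, so $\mathbb{E}|X^\nu-\lambda|=2\,\mathbb{E}(X^\nu-\lambda)^+$. Since $j^\nu>\lambda$ exactly when $j\geq m+1$ (and the term $j=m$ contributes nothing in the boundary case where $\lambda^{1/\nu}$ is an integer), this reduces the problem to evaluating $2\sum_{j=m+1}^\infty(j^\nu-\lambda)\mathbb{P}(X=j)$.

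Next I would treat the factor $j^\nu$ via the identity $j^\nu\lambda^j/(j!)^\nu=\lambda^j/((j-1)!)^\nu$ and reindex, so that $\sum_{j=m+1}^\infty j^\nu\lambda^j/(j!)^\nu=\lambda\sum_{i=m}^\infty\lambda^i/(i!)^\nu$. Subtracting off $\lambda\sum_{j=m+1}^\infty\lambda^j/(j!)^\nu$ then telescopes, leaving only the single term $\lambda\cdot\lambda^m/(m!)^\nu$. Dividing by $Z(\lambda,\nu)$ and multiplying by $2$ yields $\mathbb{E}|X^\nu-\lambda|=2\lambda\cdot Z(\lambda,\nu)^{-1}\lambda^m/(m!)^\nu$; recognising the last factor as $\mathbb{P}(X=m)=T$ gives the asserted identity $\mathbb{E}|X^\nu-\lambda|=2\lambda T$.

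There is no real obstacle here: the only point requiring a little care is correctly identifying the cut-off $m+1$ for the positive part (via the floor function) and checking that the boundary case $\lambda^{1/\nu}\in\mathbb{Z}$ does not affect the sum, since the term $j=\lambda^{1/\nu}$ is then zero. The one genuine device is the telescoping in the second paragraph, which is the exact analogue of the cancellation underlying Crow's formula in the Poisson case; everything else is routine manipulation of the CMP mass function together with the already-established fact $\mathbb{E}X^\nu=\lambda$.
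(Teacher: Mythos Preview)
Your proposal is correct and follows essentially the same route as the paper: split at $m=\lfloor\lambda^{1/\nu}\rfloor$ and use the telescoping identity $k^\nu\lambda^k/(k!)^\nu=\lambda\cdot\lambda^{k-1}/((k-1)!)^\nu$. The paper writes out both halves of the split sum rather than first invoking $\mathbb{E}X^\nu=\lambda$ to pass to $2\,\mathbb{E}(X^\nu-\lambda)^+$, but this is a cosmetic difference only.
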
 
\begin{proof}
\begin{align*}\mathbb{E}|X^\nu-\lambda| &=\sum_{k=0}^{\infty}|k^\nu-\lambda|Z(\lambda,\nu)^{-1}\frac{\lambda^k}{(k!)^{\nu}} \\
&=Z(\lambda,\nu)^{-1}\bigg[\sum_{k=0}^{\lfloor{\lambda^{1/\nu}}\rfloor}(\lambda-k^\nu)\frac{\lambda^k}{(k!)^\nu}+\sum_{k=\lfloor{\lambda^{1/\nu}}\rfloor+1}^{\infty}(k^\nu-\lambda)\frac{\lambda^k}{(k!)^\nu}\bigg] \\
&=2Z(\lambda,\nu)^{-1}\frac{\lambda^{\lfloor{\lambda^{1/\nu}}\rfloor+1}}{\lfloor{\lambda^{1/\nu}}\rfloor!}\,.
\end{align*}
\end{proof}

\subsection{Characterisations}

In Section 3, we will use Stein's method for probability approximations (see, for example, Stein \cite{s72} or Chen \cite{c75}) to give bounds on the convergence of the CMB distribution to a suitable CMP limit.  Stein's method relies on linear operators characterising distributions of interest.  In the following lemma, we present such characterisations for the CMP and CMB distributions.  These will also prove useful in deriving several other properties of these distributions in the work that follows.
\begin{lemma}\label{lem:char}
We have the following characterisations for the CMP and CMB distributions.
\begin{enumerate}
\item[(i).]Let $X\sim\mbox{CMP}(\lambda,\nu)$, and suppose that $f:\mathbb{Z}^+\mapsto\mathbb{R}$ is such that $\mathbb{E}|f(X+1)|<\infty$ and $\mathbb{E}|X^{\nu}f(X)|<\infty$.  Then 
\begin{equation}\label{char1112}\mathbb{E}[\lambda f(X+1)-X^\nu f(X)]=0.
\end{equation}
Conversely, suppose now that $W$ is a real-valued random variable supported on $\mathbb{Z}^+$ such that $\mathbb{E}[\lambda f(W+1)-W^\nu f(W)]=0$ for all bounded $f:\mathbb{Z}^+\mapsto\mathbb{R}$.  Then $W\sim \mbox{CMP}(\lambda,\nu)$.
\item[(ii).]Let $Y\sim\mbox{CMB}(n,p,\nu)$,  and suppose that $f:\mathbb{Z}^+\mapsto\mathbb{R}$ is such that $\mathbb{E}|f(Y+1)|<\infty$ and $\mathbb{E}|Y^{\nu}f(Y)|<\infty$.  Then 
\begin{equation}\label{char1113}\mathbb{E}[p(n-Y)^\nu f(Y+1)-(1-p)Y^\nu f(Y)]=0.
\end{equation}
\end{enumerate}
\end{lemma}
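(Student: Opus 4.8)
The plan is to establish each of the two identities by a direct calculation that shifts the summation index, and to obtain the converse in part (i) by feeding indicator functions into the characterising equation.

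For the forward direction of (i), I would start from the mass function (\ref{admpdf}) and compute
$$\mathbb{E}[\lambda f(X+1)]=\frac{1}{Z(\lambda,\nu)}\sum_{j=0}^{\infty}\lambda f(j+1)\frac{\lambda^{j}}{(j!)^{\nu}}=\frac{1}{Z(\lambda,\nu)}\sum_{k=1}^{\infty}f(k)\frac{\lambda^{k}}{((k-1)!)^{\nu}},$$
after re-indexing with $k=j+1$. The one thing to notice is that $\lambda^{k}/((k-1)!)^{\nu}=k^{\nu}\lambda^{k}/(k!)^{\nu}$, so the last sum equals $\sum_{k=0}^{\infty}k^{\nu}f(k)\mathbb{P}(X=k)=\mathbb{E}[X^{\nu}f(X)]$, the $k=0$ term contributing nothing. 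The rearrangement of the series is legitimate precisely because of the hypotheses $\mathbb{E}|f(X+1)|<\infty$ and $\mathbb{E}|X^{\nu}f(X)|<\infty$; this is the only place in the argument that needs any care.

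For the converse in (i), since $W$ takes values in $\mathbb{Z}^{+}$ almost surely, I would apply the hypothesis $\mathbb{E}[\lambda f(W+1)-W^{\nu}f(W)]=0$ with the bounded test function $f=\mathbf{1}_{\{k\}}$ for each fixed $k\in\mathbb{N}$. This gives $\lambda\mathbb{P}(W=k-1)=k^{\nu}\mathbb{P}(W=k)$, i.e.\ the recursion $\mathbb{P}(W=k)=(\lambda/k^{\nu})\mathbb{P}(W=k-1)$ for all $k\geq1$. Iterating yields $\mathbb{P}(W=k)=\lambda^{k}/(k!)^{\nu}\cdot\mathbb{P}(W=0)$, and summing over $k$ forces $\mathbb{P}(W=0)=Z(\lambda,\nu)^{-1}$, so that $W\sim\mbox{CMP}(\lambda,\nu)$.

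Part (ii) follows the same pattern. Expanding $\mathbb{E}[p(n-Y)^{\nu}f(Y+1)]$ from the CMB mass function, discarding the vanishing $Y=n$ term, and re-indexing by $k=j+1$, one is left with the factor $(n-k+1)\binom{n}{k-1}$; the elementary identity $(n-k+1)\binom{n}{k-1}=k\binom{n}{k}$ converts the expression into $C_{n}^{-1}\sum_{k=1}^{n}k^{\nu}f(k)\binom{n}{k}^{\nu}p^{k}(1-p)^{n-k+1}$, which is exactly $\mathbb{E}[(1-p)Y^{\nu}f(Y)]$. Here the support is finite, so there are no convergence questions and the stated hypotheses on $f$ are automatically satisfied. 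I do not expect a genuine obstacle anywhere: the lemma reduces to these two bookkeeping identities together with the standard device of testing against point masses, the mildly delicate point being only the justification of the interchange of summation in the forward half of (i).
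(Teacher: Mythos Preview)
Your forward computations for (i) and (ii) are exactly the ``straightforward manipulations'' the paper alludes to, so there is no difference there. For the converse in (i), however, the paper takes a different route: rather than testing against point masses, it invokes the Stein equation
\[
I(x\in A)-\mathbb{P}(X\in A)=\lambda f_A(x+1)-x^\nu f_A(x),
\]
appeals to Lemma~\ref{lem:stein} (proved later) for the existence of a bounded solution $f_A$, and then evaluates at $W$ to conclude $\mathbb{P}(W\in A)=\mathbb{P}(X\in A)$ for every $A\subseteq\mathbb{Z}^+$. Your indicator-function argument is correct and more elementary: it extracts the two-term recursion $\lambda\,\mathbb{P}(W=k-1)=k^\nu\,\mathbb{P}(W=k)$ directly and avoids the forward reference to the Stein-factor bounds. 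The paper's approach, by contrast, is the standard Stein-method template and reuses machinery that is needed anyway for the approximation theorems in Section~\ref{sec:stein}; it also yields the conclusion for all events $A$ in one stroke rather than building up the mass function pointwise.
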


\begin{proof}The characterising equations (\ref{char1112}) and (\ref{char1113}) may be obtained directly through straightforward manipulations, or from the work of Brown and Xia \cite{bx01}, for example, who consider such characterisations in the more general setting of the equilibrium distribution of a birth-death process.  To prove the converse statement for the CMP distribution in part (i), we consider the so-called Stein equation for the CMP distribution:
\begin{equation}\label{eq:stein111}
I(x\in A)-\mathbb{P}(X\in A)=\lambda f_A(x+1)-x^\nu f_A(x),
\end{equation}
where $X\sim\mbox{CMP}(\lambda,\nu)$, $A\subseteq \mathbb{Z}^+$ and $f_A:\mathbb{Z}^+\mapsto\mathbb{R}$.  In Lemma \ref{lem:stein} below, we will obtain the unique solution to (\ref{eq:stein111}) and prove that it is bounded.  Since $f_A$ is bounded, evaluating both sides of (\ref{eq:stein111}) at $W$ and taking expectations gives that, for any $A\subseteq \mathbb{Z}^+$,
\begin{equation*}
\mathbb{P}(W\in A)-\mathbb{P}(X\in A)=\mathbb{E}[\lambda f_A(W+1)-W^\nu f_A(W)]=0,
\end{equation*}
from which it follows that $W\sim \mbox{CMP}(\lambda,\nu)$.
\end{proof}

We do not give a converse statement, and thus a complete characterisation, for the CMB distribution.  We do not need the converse in this paper, and giving a proof analogous given to that for CMP distribution would be tedious, because we would need to solve the corresponding CMB Stein equation and then derive bounds for the solution.

\subsection{Stochastic ordering and related results}

In this section we will explore some properties of CMP and CMB distributions that may be obtained by considering various stochastic orderings.  In particular, we will use the usual stochastic order and the convex order.  For random variables $U$ and $V$, we say that $U$ is smaller than $V$ in the usual stochastic order (which we denote $U\leq_{st}V$) if $\mathbb{E}f(U)\leq\mathbb{E}f(V)$ for all increasing functions $f$.  Equivalently, $U\leq_{st}V$ if $\mathbb{P}(U>t)\leq\mathbb{P}(V>t)$ for all $t$.  For random variables $U$ and $V$ with $\mathbb{E}U=\mathbb{E}V$, we will say that $U$ is smaller than $V$ in the convex order (written $U\leq_{cx}V$) if $\mathbb{E}f(U)\leq\mathbb{E}f(V)$ for all convex functions $f$.  Many further details on these orderings may be found in the book by Shaked and Shanthikumar \cite{ss07}, for example.  

We begin with two lemmas that make use of the power biasing introduced in Section \ref{subsec:powerbias}.
\begin{lemma}\label{lem:ord1}
Let $W$ be a non-negative random variable and $0\leq\alpha<\beta$.   Suppose that $\mathbb{E}W^\alpha$ and $\mathbb{E}W^\beta$ exist.  Then $W^{(\alpha)}\leq_{st}W^{(\beta)}$.
\end{lemma}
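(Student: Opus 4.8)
The plan is to verify the equivalent tail-probability form of the usual stochastic order recalled above, namely that $\mathbb{P}(W^{(\alpha)}>t)\le\mathbb{P}(W^{(\beta)}>t)$ for every $t\in\mathbb{R}$. When $t<0$ both sides equal $1$ since $W^{(\alpha)}$ and $W^{(\beta)}$ are non-negative, so it suffices to treat $t\ge0$. Applying the defining relation \eqref{eq:powerbias} with the bounded test function $f(w)=I(w>t)$ gives
$$\mathbb{P}(W^{(\nu)}>t)=\frac{\mathbb{E}[W^\nu I(W>t)]}{\mathbb{E}W^\nu}\,,\qquad \nu\in\{\alpha,\beta\}\,,$$
so the claim reduces to the single inequality $\mathbb{E}[W^\alpha I(W>t)]\,\mathbb{E}W^\beta\le\mathbb{E}[W^\beta I(W>t)]\,\mathbb{E}W^\alpha$, in which all four expectations are finite by hypothesis (and $\mathbb{E}W^\alpha,\mathbb{E}W^\beta>0$ in the only case where the power-biased laws are defined).

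Next I would split $\mathbb{E}W^\alpha$ and $\mathbb{E}W^\beta$ according to whether $W>t$ or $W\le t$. Writing $A=\mathbb{E}[W^\alpha I(W>t)]$, $A'=\mathbb{E}[W^\alpha I(W\le t)]$, and $B,B'$ for the same quantities with exponent $\beta$, the displayed inequality is equivalent (after cancelling the common term $AB$) to $AB'\le A'B$. To prove this, introduce an independent copy $W'$ of $W$; by independence $AB'=\mathbb{E}[W^\alpha I(W>t)\,(W')^\beta I(W'\le t)]$ and $A'B=\mathbb{E}[W^\beta I(W>t)\,(W')^\alpha I(W'\le t)]$. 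On the event $\{W>t\ge W'\}$ — the only place where either integrand is non-zero — one has $W\ge W'\ge0$, hence $W^{\beta-\alpha}\ge(W')^{\beta-\alpha}$ because $\beta>\alpha$ and $w\mapsto w^{\beta-\alpha}$ is non-decreasing on $[0,\infty)$; multiplying through by $W^\alpha(W')^\alpha\ge0$ yields the pointwise bound $W^\alpha(W')^\beta\le W^\beta(W')^\alpha$ there. Thus the first integrand is dominated by the second everywhere, and taking expectations gives $AB'\le A'B$, which completes the argument.

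I do not anticipate a substantial obstacle; the points needing a little care are purely formal. These are: the convention $0^0=1$ (so that $W^{(0)}\stackrel{d}{=}W$ and the case $\alpha=0$ is included); the degenerate cases (if $\mathbb{E}W^\alpha=0$ the transform $W^{(\alpha)}$ is undefined, and if $\mathbb{P}(W>t)=0$ then $A=B=0$ and the inequality is trivial); and the observation that each expectation appearing is dominated by $\mathbb{E}W^\alpha$ or $\mathbb{E}W^\beta$, so that the splitting and cancellation above are legitimate. An equivalent route, which I might prefer if it turns out shorter in context, is to note that the reduced inequality $\mathbb{E}[W^\alpha I(W>t)]\,\mathbb{E}W^\beta\le\mathbb{E}[W^\beta I(W>t)]\,\mathbb{E}W^\alpha$ is exactly the one-variable correlation (Harris, or Chebyshev sum) inequality applied, under the $\alpha$-power-biased law of $W$, to the two non-decreasing functions $w\mapsto I(w>t)$ and $w\mapsto w^{\beta-\alpha}$.
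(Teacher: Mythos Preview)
Your argument is correct, but it is organised differently from the paper's. The paper first observes the composition identity $\bigl(W^{(\alpha)}\bigr)^{(\delta)}=_{st}W^{(\alpha+\delta)}$ and uses it (with $\delta=\beta-\alpha$) to reduce to the case $\alpha=0$; the remaining statement $W\leq_{st}W^{(\beta)}$ then follows in one line from $\mathrm{Cov}\bigl(W^{\beta},f(W)\bigr)\ge0$ for every increasing $f$. Your main route instead works directly with tail probabilities and proves the cross-product inequality $AB'\le A'B$ via an independent-copy (symmetrisation) argument; this is a perfectly valid elementary proof of the same fact, and indeed your ``equivalent route'' at the end --- Harris' inequality under the $\alpha$-biased law applied to $w\mapsto I(w>t)$ and $w\mapsto w^{\beta-\alpha}$ --- is essentially the paper's covariance step viewed after a change of measure. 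The paper's reduction via the composition identity is slightly slicker and handles all increasing $f$ at once, while your version is more self-contained and makes the pointwise comparison explicit.
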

\begin{proof}
It is easily checked, using the definition (\ref{eq:powerbias}), that for $\alpha,\delta\geq0$, $\left(W^{(\alpha)}\right)^{(\delta)}=_{st}W^{(\alpha+\delta)}$.  Taking $\delta=\beta-\alpha>0$, it therefore suffices to prove the lemma with $\alpha=0$.  That is, we need to show that $\mathbb{E}f(W)\leq\mathbb{E}f(W^{(\beta)})$ for all $\beta>0$ and all increasing functions $f$.  From the definition (\ref{eq:powerbias}) this is immediate, since $\mbox{Cov}(W^\beta,f(W))\geq0$ for all increasing $f$.
\end{proof}

\begin{lemma}\label{lem:ord2}
Let $Y\sim\mbox{CMB}(n,p,\nu)$.  Then $Y^{(\nu)}\leq_{st}Y+1$.
\end{lemma}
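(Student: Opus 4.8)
The plan is to identify the $\nu$-power-biased distribution of $Y$ explicitly and then reduce the claim to a monotonicity-in-$n$ statement for the CMB family. We may assume $\nu>0$ and $0<p<1$, the remaining cases being trivial (for $\nu=0$ we have $Y^{(\nu)}=_{st}Y\leq_{st}Y+1$, and for $p\in\{0,1\}$ the distribution is degenerate); we also take $n\geq2$, the case $n=1$ (where $Y$ is Bernoulli and $Y^{(\nu)}\equiv1$) being immediate. First I would use the identity $j\binom{n}{j}=n\binom{n-1}{j-1}$ in the mass function of $Y^{(\nu)}$ given in Section \ref{subsec:powerbias}: for $j\in\{1,\ldots,n\}$,
$$
\mathbb{P}(Y^{(\nu)}=j)=\frac{j^\nu}{\mathbb{E}Y^\nu}\cdot\frac{1}{C_n}\binom{n}{j}^\nu p^j(1-p)^{n-j}=\frac{n^\nu p}{(\mathbb{E}Y^\nu)\,C_n}\binom{n-1}{j-1}^\nu p^{j-1}(1-p)^{(n-1)-(j-1)}.
$$
Comparing with the CMB mass function shows $Y^{(\nu)}-1\sim\mbox{CMB}(n-1,p,\nu)$ (and forces $\mathbb{E}Y^\nu=(C_{n-1}/C_n)n^\nu p$, recovering Proposition \ref{prop:moments}(ii) with $r=1$). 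Since the usual stochastic order is preserved under the addition of a constant, it therefore suffices to prove $\mbox{CMB}(n-1,p,\nu)\leq_{st}\mbox{CMB}(n,p,\nu)$.

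For this reduction I would invoke the likelihood ratio order, which implies the usual stochastic order (see Shaked and Shanthikumar \cite{ss07}). With $U\sim\mbox{CMB}(n-1,p,\nu)$ and $V\sim\mbox{CMB}(n,p,\nu)$, a direct computation using $\binom{n}{j}/\binom{n-1}{j}=n/(n-j)$ gives, for $j\in\{0,1,\ldots,n-1\}$,
$$
\frac{\mathbb{P}(V=j)}{\mathbb{P}(U=j)}=\frac{(1-p)C_{n-1}}{C_n}\left(\frac{n}{n-j}\right)^\nu,
$$
which is non-decreasing in $j$ on $\{0,\ldots,n-1\}$; at $j=n$ the ratio is $+\infty$, consistently with $\mathbb{P}(U=n)=0$. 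Hence $U\leq_{lr}V$, so $U\leq_{st}V$, and the lemma follows.

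An alternative, which avoids identifying $Y^{(\nu)}$, is to combine the power-biasing identity (\ref{eq:powerbias}) with the CMB characterisation (\ref{char1113}) of Lemma \ref{lem:char}: for any increasing $f$ (all relevant expectations are finite since $Y$ is bounded), these give $(\mathbb{E}Y^\nu)\mathbb{E}f(Y^{(\nu)})=\mathbb{E}[Y^\nu f(Y)]=\frac{p}{1-p}\mathbb{E}[(n-Y)^\nu f(Y+1)]$, and the choice $f\equiv1$ yields $p\mathbb{E}[(n-Y)^\nu]=(1-p)\mathbb{E}Y^\nu$. Setting $g(k)=\frac{p(n-k)^\nu}{(1-p)\mathbb{E}Y^\nu}-1$, one then has $\mathbb{E}f(Y+1)-\mathbb{E}f(Y^{(\nu)})=-\mathbb{E}[g(Y)f(Y+1)]=-\mbox{Cov}(g(Y),f(Y+1))\geq0$, since $\mathbb{E}g(Y)=0$ while $g$ is non-increasing and $k\mapsto f(k+1)$ is non-decreasing. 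I do not expect a genuine obstacle in either route; the only points requiring a little care are the algebraic simplification of the mass-function ratios (respectively, the reduction to a covariance via the characterisation) and the bookkeeping at the indices where one of the two mass functions vanishes.
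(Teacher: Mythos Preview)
Both of your routes are correct. Your second approach is essentially the paper's own proof: the paper writes $\mathbb{E}f(Y+1)-\mathbb{E}f(Y^{(\nu)})$ as the expectation $\mathbb{E}\big[f(Y+1)\big\{1-\tfrac{p(n-Y)^\nu}{(1-p)\mathbb{E}Y^\nu}\big\}\big]$, observes via Lemma~\ref{lem:char}(ii) that the bracketed factor has mean zero, and concludes by recognising this as a covariance of two increasing functions of $Y$; your function $-g$ is exactly the paper's bracketed factor.

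Your first route, by contrast, is genuinely different from the paper and arguably more informative. The paper does not make the structural observation that $Y^{(\nu)}-1\sim\mbox{CMB}(n-1,p,\nu)$; it works directly with mass functions and the covariance trick. By identifying the power-biased law explicitly, you reduce the lemma to the monotonicity $\mbox{CMB}(n-1,p,\nu)\leq_{st}\mbox{CMB}(n,p,\nu)$, and your likelihood-ratio computation in fact yields the stronger conclusion $Y^{(\nu)}\leq_{lr}Y+1$. What this buys is a cleaner conceptual picture (the $\nu$-power bias acts on the CMB family simply by shifting the first parameter, just as ordinary size-biasing does for the binomial), at the modest cost of invoking the implication $\leq_{lr}\Rightarrow\leq_{st}$. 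The paper's covariance argument is more self-contained but does not expose this structure.
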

\begin{proof}
Note that
\begin{align*}
\mathbb{P}(Y+1=j)&=\frac{1}{C_n}\binom{n}{j-1}^\nu p^{j-1}(1-p)^{n-j+1}\,,\qquad j=1,\ldots,n+1\,;\\
\mathbb{P}(Y^{(\nu)}=j)&=\frac{1}{C_n\mathbb{E}Y^\nu}\frac{p(n-j+1)^\nu}{1-p}\binom{n}{j-1}^\nu p^{j-1}(1-p)^{n-j+1}\,,\qquad j=1,\ldots,n\,.
\end{align*}
Hence, the condition that $\mathbb{E}f(Y^{(\nu)})\leq\mathbb{E}f(Y+1)$ (for all increasing $f:\mathbb{Z}^+\mapsto\mathbb{R}$) is equivalent to the non-negativity of
\begin{equation}\label{eq:ord}
\mathbb{E}\left[f(Y+1)\left\{1-\frac{p(n-Y)^\nu}{(1-p)\mathbb{E}Y^\nu}\right\}\right]\,,
\end{equation}
for all $f$ increasing.  Noting that, by Lemma \ref{lem:char} (ii), $\mathbb{E}\left[1-\frac{p(n-Y)^\nu}{(1-p)\mathbb{E}Y^\nu}\right]=0$, (\ref{eq:ord}) is the covariance of two increasing functions, and is hence non-negative.
\end{proof}

\subsubsection{Ordering results for CMP distributions}

Throughout this section, let $X\sim\mbox{CMP}(\lambda,\nu)$.  It is clear that $X^{(\nu)}=_{st}X+1$ and hence, for $\nu\geq1$, Lemma \ref{lem:ord1} gives $X^{(1)}\leq_{st}X+1$.  This is the negative dependence condition employed by Daly, Lef\`evre and Utev \cite{dlu12}.  Some consequences of this stochastic ordering are given in Proposition \ref{prop:cmp_ord1} below.  Before we can state these, we define the total variation distance between non-negative, integer-valued random variables $U$ and $V$:
$$
d_{TV}(\mathcal{L}(U),\mathcal{L}(V))=\sup_{A\subseteq\mathbb{Z}^+}|\mathbb{P}(U\in A)-\mathbb{P}(V\in A)|\,.
$$
We will also need to define the Poincar\'e (inverse spectral gap) constant $R_U$ for a non-negative, integer-valued random variable $U$:
$$
R_U=\sup_{g\in\mathcal{G}(U)}\left\{\frac{\mathbb{E}[g(U)^2]}{\mathbb{E}[\left\{g(U+1)-g(U)\right\}^2]}\right\}\,,
$$
where the supremum is take over the set
$$
\mathcal{G}(U)=\left\{g:\mathbb{Z}^+\mapsto\mathbb{R}\mbox{ with }\mathbb{E}[g(U)^2]<\infty\mbox{ and }\mathbb{E}g(U)=0\right\}\,.
$$
\begin{proposition}\label{prop:cmp_ord1}
Let $X\sim\mbox{CMP}(\lambda,\nu)$ with $\nu\geq1$.  Let $\mu=\mathbb{E}X$.  Then
\begin{enumerate}
\item[(i).]
$$
d_{TV}(\mathcal{L}(X),\mbox{Po}(\mu))\leq\frac{1}{\mu}\left(\mu-\mbox{Var}(X)\right)\sim\frac{\nu-1}{\nu}+O\left(\lambda^{-1/\nu}\right)\,,
$$
as $\lambda\rightarrow\infty$.
\item[(ii).]
$$
\mbox{Var}(X)\leq R_X\leq\mu\,.
$$
\item[(iii).] $X\leq_{cx}Z$, where $Z\sim\mbox{Po}(\mu)$.  In particular,
\begin{align*}
\mathbb{P}(X\geq\mu+t)&\leq e^t\left(1+\frac{t}{\mu}\right)^{-(\mu+t)}\,,\\
\mathbb{P}(X\leq\mu-t)&\leq e^{-t}\left(1-\frac{t}{\mu}\right)^{t-\mu}\,,
\end{align*}
where the latter bound applies if $t<\mu$. 
\end{enumerate}
\end{proposition}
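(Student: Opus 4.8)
The plan is to reduce all three parts to a single input established above: $X^{(\nu)}=_{st}X+1$, which for $\nu\geq1$ yields $X^{(1)}\leq_{st}X+1$ by Lemma \ref{lem:ord1}. This is precisely the negative-dependence condition of Daly, Lef\`evre and Utev \cite{dlu12}, and parts (i)--(iii) are the CMP instances of the Poisson approximation bound, Poincar\'e inequality and convex ordering that \cite{dlu12} derive from it; the remaining work is then the asymptotics for (i) and the explicit tail estimates in (iii). For (i) I would also indicate the short direct Stein-method proof: writing $\mu=\mathbb{E}X$ and using the size-bias identity $\mathbb{E}[Xf(X)]=\mu\,\mathbb{E}f(X^{(1)})$ from \eqref{eq:powerbias}, for $A\subseteq\mathbb{Z}^+$ and the solution $g_A$ of the $\mbox{Po}(\mu)$ Stein equation one has $\mathbb{P}(X\in A)-\mathbb{P}(Z\in A)=\mu\,\mathbb{E}[g_A(X+1)-g_A(X^{(1)})]$; coupling $X^{(1)}\leq X+1$ almost surely and using $\sup_{j\geq0}|g_A(j+1)-g_A(j)|\leq\mu^{-1}$ (Barbour, Holst and Janson \cite{bhj92}), this is at most $\mathbb{E}[X+1-X^{(1)}]$, and since $\mathbb{E}X^{(1)}=\mathbb{E}X^2/\mu=\mu+\mbox{Var}(X)/\mu$ we obtain the bound $\mu^{-1}(\mu-\mbox{Var}(X))$ (this also records that $\mbox{Var}(X)\leq\mu$, so the bound is non-negative). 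The asymptotic then follows from $\mathbb{E}X\sim\lambda^{1/\nu}$ (Proposition \ref{prop:moments2}) and $\mbox{Var}(X)\sim\nu^{-1}\lambda^{1/\nu}$ (equation \eqref{eq:var}), giving $\mbox{Var}(X)/\mu=\nu^{-1}+O(\lambda^{-1/\nu})$.

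For (ii), the lower bound is immediate: $g(j)=j-\mu$ lies in $\mathcal{G}(X)$ and makes the ratio in the definition of $R_X$ equal to $\mbox{Var}(X)$, so $R_X\geq\mbox{Var}(X)$. The upper bound $R_X\leq\mu$ is the Poincar\'e inequality $\mbox{Var}(g(X))\leq\mu\,\mathbb{E}[(g(X+1)-g(X))^2]$ --- the CMP analogue of Chen's inequality for the Poisson --- which I would deduce from $X^{(1)}\leq_{st}X+1$ via the corresponding result in \cite{dlu12}.

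For (iii), the convex ordering $X\leq_{cx}Z$ (with $\mathbb{E}X=\mathbb{E}Z=\mu$ holding by construction) again comes from $X^{(1)}\leq_{st}X+1$ by the relevant theorem of \cite{dlu12}. The two tail bounds are then routine: $x\mapsto e^{\theta x}$ is convex, so $\mathbb{E}e^{\theta X}\leq\mathbb{E}e^{\theta Z}=\exp(\mu(e^{\theta}-1))$ for all $\theta\in\mathbb{R}$; applying Markov's inequality to $e^{\theta X}$ with $\theta>0$ for the upper tail and $\theta<0$ for the lower, and then optimising by taking $e^{\theta}=1+t/\mu$ (respectively $e^{\theta}=1-t/\mu$, which needs $t<\mu$), yields exactly the stated expressions $e^{t}(1+t/\mu)^{-(\mu+t)}$ and $e^{-t}(1-t/\mu)^{t-\mu}$.

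The genuinely substantive steps are the Poincar\'e inequality in (ii) and the convex ordering in (iii): proving these afresh would require solving and bounding the associated Stein equations, so the economical route is to cite \cite{dlu12} once the negative-dependence condition is in hand. Everything else --- the Stein computation and asymptotic bookkeeping in (i) and the Chernoff optimisation in (iii) --- is routine.
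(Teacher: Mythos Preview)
Your approach is essentially the same as the paper's: establish the negative-dependence condition $X^{(1)}\leq_{st}X+1$ from $X^{(\nu)}=_{st}X+1$ and Lemma~\ref{lem:ord1}, then invoke existing literature for each of (i)--(iii), with the asymptotic in (i) coming from Proposition~\ref{prop:moments2} and \eqref{eq:var}. Your additional details --- the explicit Stein-method computation for (i), the test function $g(j)=j-\mu$ for the lower bound in (ii), and the Chernoff optimisation for the tail bounds in (iii) --- are all correct and expand on what the paper leaves implicit.

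The one point to correct is attribution. You route all three citations through \cite{dlu12}, but that reference only contains the total-variation bound used in (i). The Poincar\'e upper bound $R_X\leq\mu$ under the condition $X^{(1)}\leq_{st}X+1$ is Theorem~1.1 of Daly and Johnson \cite{dj13}, and the convex ordering $X\leq_{cx}Z$ (together with the resulting concentration inequalities) is Theorem~2.2 and Corollary~2.8 of Daly \cite{d15}. These results are not in \cite{dlu12}, so as written your proposal for (ii) and (iii) points to a source that does not deliver the needed statements. The mathematical strategy is sound; only the references need fixing.
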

\begin{proof}
The upper bound in (i) follows from Proposition 3 of Daly, Lef\`evre and Utev \cite{dlu12}.  The asymptotic behaviour of the upper bound is a consequence of our Proposition \ref{prop:moments2} and (\ref{eq:var}).  In (ii), the lower bound is standard and the upper bound is from Theorem 1.1 of Daly and Johnson \cite{dj13}.  (iii) follows from Theorem 2.2 and Corollary 2.8 of Daly \cite{d15}.
\end{proof}

On the other hand, if $\nu<1$ we have that $X+1\leq_{st}X^{(\nu)}$.  In that case, Proposition 3 of Daly, Lef\`evre and Utev \cite{dlu12} gives the upper bound
$$
d_{TV}(\mathcal{L}(X),\mbox{Po}(\mu))\leq\frac{1}{\mu}\left(\mbox{Var}(X)-\mu\right)\sim\frac{1-\nu}{\nu}+O(\lambda^{-\/\nu})\,,
$$ 
as $\lambda\rightarrow\infty$, where $\mu=\mathbb{E}X$ and the asymptotics of the upper bound again follow from our Proposition \ref{prop:moments2} and (\ref{eq:var}).  

However, in the case $\nu<1$ we cannot adapt the proof of Theorem 1.1 of Daly and Johnson \cite{dj13} to give an analogue of Proposition \ref{prop:cmp_ord1} (ii). By suitably modifying the proof of Theorem 2.2 of Daly \cite{d15}, we may note that $Z\leq_{cx}X$ in this case, where $Z\sim\mbox{Po}(\mu)$.  There is, however, no concentration inequality corresponding to that given in Proposition \ref{prop:cmp_ord1} (iii). 

\subsubsection{Ordering results for CMB distributions}

Now let $Y\sim\mbox{CMB}(n,p,\nu)$.  In the case $\nu\geq1$ we may combine Lemmas \ref{lem:ord1} and \ref{lem:ord2} to see that $Y^{(1)}\leq_{st}Y+1$.  That is, the negative dependence condition holds.  We thus have the following analogue of Proposition \ref{prop:cmp_ord1}, which may be proved in the same way as that result.
\begin{proposition}
Let $Y\sim\mbox{CMB}(n,p,\nu)$ with $\nu\geq1$.  Let $\mu=\mathbb{E}Y$.  Then
\begin{enumerate}
\item[(i).]
$$
d_{TV}(\mathcal{L}(Y),\mbox{Po}(\mu))\leq\frac{1}{\mu}\left(\mu-\mbox{Var}(Y)\right)\,.
$$
\item[(ii).]
$$
\mbox{Var}(Y)\leq R_Y\leq\mu\,.
$$
\item[(iii).] $Y\leq_{cx}X$, where $X\sim\mbox{Po}(\mu)$.  In particular,
\begin{align*}
\mathbb{P}(Y\geq\mu+t)&\leq e^t\left(1+\frac{t}{\mu}\right)^{-(\mu+t)}\,,\\
\mathbb{P}(Y\leq\mu-t)&\leq e^{-t}\left(1-\frac{t}{\mu}\right)^{t-\mu}\,,
\end{align*}
where the latter bound applies if $t<\mu$. 
\end{enumerate}
\end{proposition}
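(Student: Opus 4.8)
The plan is to reduce all three parts to the single negative dependence condition $Y^{(1)}\leq_{st}Y+1$, after which the proposition follows from exactly the external results quoted in the proof of Proposition~\ref{prop:cmp_ord1}. First I would note that $\mathbb{E}Y^\nu<\infty$ automatically, since $Y$ has bounded support, so the power-biased random variables $Y^{(1)}$ and $Y^{(\nu)}$ are well defined. By Lemma~\ref{lem:ord2} we have $Y^{(\nu)}\leq_{st}Y+1$, and since $\nu\geq1$, Lemma~\ref{lem:ord1} applied with $\alpha=1$ and $\beta=\nu$ gives $Y^{(1)}\leq_{st}Y^{(\nu)}$. Chaining these yields $Y^{(1)}\leq_{st}Y+1$, which is precisely the negative dependence (size-bias) condition used by Daly, Lef\`evre and Utev~\cite{dlu12}. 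This step replaces the identity $X^{(\nu)}=_{st}X+1$ that was available directly in the CMP case.

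With this in hand, part (i) is immediate from Proposition~3 of~\cite{dlu12}, which bounds $d_{TV}(\mathcal{L}(Y),\mbox{Po}(\mu))$ by $\mu^{-1}(\mu-\mbox{Var}(Y))$ under the hypothesis $Y^{(1)}\leq_{st}Y+1$; the same hypothesis forces $\mbox{Var}(Y)\leq\mu$, so the bound is non-negative. For part (ii), the lower bound $\mbox{Var}(Y)\leq R_Y$ is the standard variational lower bound for the inverse spectral gap (take $g(k)=k-\mu$ in the defining supremum), and the upper bound $R_Y\leq\mu$ is Theorem~1.1 of Daly and Johnson~\cite{dj13}, whose hypothesis is again exactly $Y^{(1)}\leq_{st}Y+1$. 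For part (iii), the convex ordering $Y\leq_{cx}X$ with $X\sim\mbox{Po}(\mu)$ follows from Theorem~2.2 of Daly~\cite{d15}, and the two concentration inequalities are obtained by feeding the exponential test functions $k\mapsto e^{\theta k}$ into the convex order and optimising over $\theta>0$ (respectively $\theta<0$), exactly as in Corollary~2.8 of~\cite{d15}; this reproduces the classical Chernoff bounds for $\mbox{Po}(\mu)$, with the lower-tail estimate valid only when $t<\mu$.

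The only points requiring any care — and hence the main, albeit minor, obstacle — are boundary cases and the exact scope of the cited theorems. If $p\in\{0,1\}$ then $Y$ is degenerate and every assertion holds trivially, so one may assume $0<p<1$; one should also confirm that the results of~\cite{dlu12}, \cite{dj13} and~\cite{d15} are stated for arbitrary non-negative integer-valued random variables satisfying $Y^{(1)}\leq_{st}Y+1$ and do not implicitly require infinite support, which they do not. Apart from this, the argument is a verbatim transcription of the proof of Proposition~\ref{prop:cmp_ord1}, the sole genuinely new ingredient being the derivation of the negative dependence condition through Lemmas~\ref{lem:ord1} and~\ref{lem:ord2}.
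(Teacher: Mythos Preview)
Your proposal is correct and follows precisely the route the paper indicates: combine Lemmas~\ref{lem:ord1} and~\ref{lem:ord2} to obtain $Y^{(1)}\leq_{st}Y+1$, then invoke the same external results from~\cite{dlu12},~\cite{dj13} and~\cite{d15} as in the proof of Proposition~\ref{prop:cmp_ord1}. The paper's own proof is just the sentence ``may be proved in the same way as that result,'' and your write-up is a faithful (indeed more detailed) expansion of that.
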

There is no corresponding result in the case $\nu<1$, since the stochastic ordering $Y^{(\nu)}\leq_{st}Y+1$ holds regardless of the sign of $1-\nu$, and so we cannot use our previous lemmas to make a stochastic comparison between $Y^{(1)}$ and $Y+1$ when $Y\sim\mbox{CMB}(n,p,\nu)$ with $\nu<1$.

\subsection{Example: $\nu=2$}

For some illustration of our results, consider the case $\nu=2$ and let $X\sim\mbox{CMP}(\lambda,2)$.  Let $I_r(x)$ be the modified Bessel function of the first kind defined by
$$
I_r(x)=\sum_{k=0}^\infty\frac{1}{k!\Gamma(r+k+1)}\left(\frac{x}{2}\right)^{r+2k}\,.
$$
Note that, by definition, the normalizing constant $Z(\lambda,2)=I_0(2\sqrt{\lambda})$.  Hence, for $m\in\mathbb{N}$,
\begin{align*}
\mathbb{E}[(X)_m]&=\frac{1}{I_0(2\sqrt{\lambda})}\sum_{k=m}^{\infty}\frac{\lambda^k}{k!(k-m)!}\\
&=\frac{\lambda^{m/2}I_m(2\sqrt{\lambda})}{I_0(2\sqrt{\lambda})}\,.
\end{align*}
From (\ref{eq:stirling}) we therefore have
$$
\mathbb{E}X^m=\sum_{k=1}^m{m\brace k}\frac{\lambda^{k/2}I_k(2\sqrt{\lambda})}{I_0(2\sqrt{\lambda})}\,.
$$
In particular, the mean of $X$ is given by
$$
\mathbb{E}X=\frac{\sqrt{\lambda}I_1(2\sqrt{\lambda})}{I_0(2\sqrt{\lambda})}\,.
$$
Also, since $\mathbb{E}X^2=\lambda$, the variance is given by
$$
\mathrm{Var}(X)=\lambda\left(1-\frac{I_1(2\sqrt{\lambda})^2}{I_0(2\sqrt{\lambda})^2}\right)\,.
$$
Formulas for the cumulants, skewness and excess kurtosis of $X$ can also be obtained, but their expressions are more complicated and are omitted.

Note that the asymptotic formula $I_r(x)\sim\frac{1}{\sqrt{2\pi x}}e^x$ as $x\rightarrow\infty$ (see Olver et al$.$ \cite{o10}) easily allows one to to verify (\ref{eq:var}) in this case.  Writing $\mbox{Var}(X)=\mathbb{E}[X(X-1)]+\mathbb{E}X-(\mathbb{E}X)^2$, the Tur\'{a}n inequality $I_r(x)^2>I_{r+1}(x)I_{r-1}(x)$ (see Amos \cite{a74}) also allows direct verification that $\mbox{Var}(X)<\mathbb{E}X$, which follows from the convex ordering in Proposition \ref{prop:cmp_ord1}.  The total variation bound in that same result may be expressed as
$$
d_{TV}(\mathcal{L}(X),\mbox{Po}(\mathbb{E}X))\leq\sqrt{\lambda}\left(\frac{I_1(2\sqrt{\lambda})}{I_0(2\sqrt{\lambda})}-\frac{I_2(2\sqrt{\lambda})}{I_1(2\sqrt{\lambda})}\right)\,.
$$ 

\section{Convergence and approximation for CMB distributions}\label{sec:stein}

In this section we will use Stein's method for probability approximation to derive an explicit bound on the convergence of $Y\sim\mbox{CMB}(n,\lambda/n^\nu,\nu)$ to $X\sim\mbox{CMP}(\lambda,\nu)$ as $n\rightarrow\infty$.  This convergence is the analogue of the classical convergence of the binomial distribution to a Poisson limit, which corresponds to the case $\nu=1$ here.

Stein's method was first developed by Stein \cite{s72} in the context of normal approximation. The same techniques were applied to Poisson approximation by Chen \cite{c75}.  An account of the method for Poisson approximation, together with a wealth of examples, is given by Barbour, Holst and Janson \cite{bhj92}.  Stein's method has also found a large number of applications beyond the classical normal and Poisson approximation settings.  For an introduction to Stein's method and discussion of its wide applicability, the reader is referred to Barbour and Chen \cite{bc05} and references therein.

For future use, we define
\begin{equation*}
g_\nu(\lambda)= \left\{
\begin{array}{ll}
\min\left\{1,\max\left\{1+\frac{1}{\nu},\left(\frac{3}{2}\right)^\nu\right\}\left(1-\lambda^{-1/2\nu}\right)^{1/\nu-1}\lambda^{1/2\nu-1}\right\} & \mbox{ if $\nu\geq1$ and $\lambda>1$}\,, \\
\left(1+\frac{1}{\nu}\right)\left(1+\lambda^{-1/2\nu}\right)^{1/\nu-1}\lambda^{1/2\nu-1} & \mbox{ if $\nu\leq1$ and $\lambda\geq1$}\,, \\
1 & \mbox{ if $\nu\geq1$ and $\lambda\leq 1$}\,, \\
\left(1-\lambda^{1-\nu}\right)^{-1} & \mbox{ if $\nu<1$ and $\lambda<1$}\,. \\
\end{array} \right.  
\end{equation*}

We use much of the remainder of this section to prove the following.
\begin{theorem}\label{thm:stein}
Let $Y\sim\mbox{CMB}(n,\lambda/n^\nu,\nu)$ for some $0<\lambda<n^\nu$, and let $X\sim\mbox{CMP}(\lambda,\nu)$.  Then
\begin{align*}
d_{TV}(\mathcal{L}(Y),\mathcal{L}(X))&\leq \lambda\left(\frac{\lambda}{n^\nu-\lambda}+\frac{\nu n^\nu\mathbb{E}Y}{n(n^\nu-\lambda)}\right)\left(g_\nu(\lambda)+(1+\mathbb{E}Y)\min\left\{1,\lambda^{-1}\right\}\right)\\
&\quad+\frac{\lambda c_\nu}{n}\left(\mathbb{E}Y+\mathbb{E}Y^2\right)\min\left\{1,\lambda^{-1}\right\}\,,
\end{align*}
where $c_\nu=\max\{1,\nu\}$ and $g_\nu(\lambda)$ is as defined above.
\end{theorem}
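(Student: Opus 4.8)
The plan is to apply Stein's method, using the characterisation (\ref{char1112}) of the CMP distribution as the Stein operator and the characterisation (\ref{char1113}) of $Y\sim\mbox{CMB}(n,\lambda/n^\nu,\nu)$ to control the resulting expectation. The hypothesis $0<\lambda<n^\nu$ is precisely what guarantees that $p:=\lambda/n^\nu$ is a genuine probability (so $Y$ is well defined) and that $n^\nu-\lambda>0$. As a preliminary I would solve the CMP Stein equation (\ref{eq:stein111}) and bound its solution $f_A$ and first difference $\Delta f_A(x):=f_A(x+1)-f_A(x)$, uniformly in $A\subseteq\mathbb{Z}^+$; this is the auxiliary Lemma \ref{lem:stein} alluded to above, and I expect bounds of the form $\sup_A\|f_A\|_\infty\le g_\nu(\lambda)$ and $\sup_A\|\Delta f_A\|_\infty\le\min\{1,\lambda^{-1}\}$, with the four branches of $g_\nu$ matching the regimes $\nu\gtrless1$, $\lambda\gtrless1$. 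Obtaining these bounds is the main obstacle: the CMP mass function (\ref{admpdf}) has no closed form, so one must write the Stein solution as a ratio of tail sums of (\ref{admpdf}) and estimate them using the asymptotics (\ref{eq:norm}) together with the monotonicity of $j\mapsto\mathbb{P}(X=j-1)/\mathbb{P}(X=j)$, ensuring the resulting bounds are uniform in $A$, valid for small $\lambda$, and consistent with the classical Poisson Stein factors when $\nu=1$.

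Granted those bounds, the rest is comparatively routine. Since $f_A$ solves (\ref{eq:stein111}), evaluating at $Y$ and taking expectations gives $d_{TV}(\mathcal{L}(Y),\mathcal{L}(X))=\sup_A|\mathbb{E}[\lambda f_A(Y+1)-Y^\nu f_A(Y)]|$, and rewriting (\ref{char1113}) with $p=\lambda/n^\nu$ as $\mathbb{E}[Y^\nu f(Y)]=\frac{\lambda}{n^\nu-\lambda}\mathbb{E}[(n-Y)^\nu f(Y+1)]$ turns this into
\[
d_{TV}(\mathcal{L}(Y),\mathcal{L}(X))=\frac{\lambda}{n^\nu-\lambda}\,\sup_{A\subseteq\mathbb{Z}^+}\bigl|\mathbb{E}\bigl[f_A(Y+1)\bigl(n^\nu-(n-Y)^\nu-\lambda\bigr)\bigr]\bigr|.
\]
Setting $R(Y):=n^\nu-(n-Y)^\nu-\lambda$, I would split $\mathbb{E}[f_A(Y+1)R(Y)]=\mathrm{Cov}(f_A(Y+1),R(Y))+\mathbb{E}[f_A(Y+1)]\,\mathbb{E}[R(Y)]$. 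The map $y\mapsto n^\nu-(n-y)^\nu$ is increasing with increments bounded by a constant multiple of $n^{\nu-1}$ — here convexity of $t\mapsto t^\nu$ for $\nu\ge1$ and concavity for $\nu\le1$ enter, and give rise to the constant $c_\nu=\max\{1,\nu\}$ — so the covariance is at most $\|\Delta f_A\|_\infty$ times a quantity controlled by $\mathbb{E}Y$ and $\mathbb{E}Y^2$, while $|\mathbb{E}R(Y)|\le\lambda+c_\nu n^{\nu-1}\mathbb{E}Y$ is paired with $\|f_A\|_\infty\le g_\nu(\lambda)$. Multiplying by $\frac{\lambda}{n^\nu-\lambda}$ and using $n^{\nu-1}=n^\nu/n$ then assembles the two terms of Theorem \ref{thm:stein}, the factor $(1+\mathbb{E}Y)$ multiplying $\min\{1,\lambda^{-1}\}$ coming from the accounting of the increments of $f_A$ in the covariance bound. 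An equivalent route replaces the covariance identity by extracting the part of $R(Y)$ linear in $Y$ and invoking the size-biasing coupling behind $Y^{(1)}\leq_{st}Y+1$ (valid for $\nu\ge1$ by Lemmas \ref{lem:ord1} and \ref{lem:ord2}, via the exchangeable representation (\ref{eq:ex})), which trades $\|f_A\|_\infty$ for $\|\Delta f_A\|_\infty$ at the cost of an extra $\mathbb{E}Y$.

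Throughout this last step the four parameter regimes must be kept separate, since both the increment bound on $n^\nu-(n-Y)^\nu$ (convex versus concave) and the Stein factor $g_\nu(\lambda)$ (according to $\lambda\gtrless1$) change form; the case $\nu<1$ is genuinely more delicate, in line with the blow-up of $g_\nu$ as $\lambda\uparrow1$ there. Nonetheless the substantive difficulty is the preliminary estimate $\|f_A\|_\infty\le g_\nu(\lambda)$ for the CMP Stein solution, with its explicit dependence on $\lambda$ and $\nu$; everything after that is bookkeeping built on the two characterisations of Lemma \ref{lem:char}.
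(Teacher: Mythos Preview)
Your overall architecture is right and matches the paper: Stein's method via the CMP characterisation, the Stein-factor lemma $\|f_A\|\le g_\nu(\lambda)$ and $\|\Delta f_A\|\le\min\{1,\lambda^{-1}\}$ as the main technical ingredient, and then the CMB identity (\ref{char1113}) to turn $\lambda\mathbb{E}f_A(Y+1)-\mathbb{E}[Y^\nu f_A(Y)]$ into $\frac{\lambda}{n^\nu-\lambda}\mathbb{E}[f_A(Y+1)R(Y)]$. Your mean--covariance split is even algebraically identical to the paper's decomposition: one checks directly that $\frac{\lambda}{n^\nu-\lambda}\mathbb{E}R(Y)=\lambda-\mathbb{E}Y^\nu$ and $\frac{\lambda}{n^\nu-\lambda}\mathrm{Cov}(f_A(Y+1),R(Y))=\mathbb{E}Y^\nu\bigl(\mathbb{E}f_A(Y+1)-\mathbb{E}f_A(Y^{(\nu)})\bigr)$, which is exactly what the paper writes down before (\ref{eq:tvb}).

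The gap is in how you bound the covariance term, and it bites precisely for $\nu<1$. Your claim that the increments of $y\mapsto n^\nu-(n-y)^\nu$ are bounded by $c_\nu n^{\nu-1}$ is false when $\nu<1$: at $y=n-1$ the increment is $1^\nu-0^\nu=1$, which is much larger than $n^{\nu-1}$. So the Lipschitz-on-$R$ route, as you describe it, cannot produce the second term $\frac{\lambda c_\nu}{n}(\mathbb{E}Y+\mathbb{E}Y^2)\min\{1,\lambda^{-1}\}$ of the theorem. Your fallback --- extracting the linear part of $R$ and using $Y^{(1)}\leq_{st}Y+1$ --- is also restricted to $\nu\ge1$, as you yourself note.

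The device the paper uses to cover all $\nu>0$ is the \emph{$\nu$-power-biased} ordering $Y^{(\nu)}\leq_{st}Y+1$ of Lemma~\ref{lem:ord2}, not the size-bias ordering $Y^{(1)}\leq_{st}Y+1$. Coupling $Y^{(\nu)}\le Y+1$ almost surely gives $|\mathbb{E}f_A(Y+1)-\mathbb{E}f_A(Y^{(\nu)})|\le\|\Delta f_A\|\,(1+\mathbb{E}Y-\mathbb{E}Y^{(\nu)})$, which after multiplying by $\mathbb{E}Y^\nu$ yields $\|\Delta f_A\|\bigl((1+\mathbb{E}Y)\mathbb{E}Y^\nu-\mathbb{E}Y^{\nu+1}\bigr)$. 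The constant $c_\nu$ then enters not through any increment of $R$, but through the elementary inequality $(1-Y/n)^\nu\ge 1-c_\nu Y/n$ used in Lemma~\ref{lem:stein1} to bound $\mathbb{E}Y^{\nu+1}$ from below. That, together with Lemma~\ref{lem:stein2} for $|\lambda-\mathbb{E}Y^\nu|$, assembles the two displayed terms. If you replace your covariance step by the $\nu$-power-bias ordering (which holds for all $\nu$), your sketch goes through.
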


\begin{remark}\emph{Taking $\lambda=n^\nu p$ in Theorem \ref{thm:stein} gives the following bound for the total variation distance between the laws of $Y\sim\mbox{CMB}(n,p,\nu)$ and $X\sim\mbox{CMP}(n^\nu p,\nu)$:
\begin{align*}
d_{TV}(\mathcal{L}(Y),\mathcal{L}(X))&\leq n^\nu p\left(\frac{p}{1-p}+\frac{\nu \mathbb{E}Y}{n(1-p)}\right)\left(g_\nu(n^\nu p)+(1+\mathbb{E}Y)\min\left\{1,(n^\nu p)^{-1}\right\}\right)\\
&\quad+n^{\nu-1}pc_\nu\left(\mathbb{E}Y+\mathbb{E}Y^2\right)\min\left\{1,(n^\nu p)^{-1}\right\}\,.
\end{align*}
However, we prefer to work with the parameters given in Theorem \ref{thm:stein}, because in that theorem the CMP distribution (which we regard as the limit distribution) does not depend on $n$.}
\end{remark}

\begin{remark}\label{numrem}\emph{For large $n$, the bound of Theorem \ref{thm:stein} is of order $n^{-\min\{1,\nu\}}$, which is in agreement with the order of upper bound in the classical case $\nu=1$ that is given by Barbour, Holst and Janson \cite{bhj92}.  In fact, Barbour and Hall \cite{bh84} obtained a lower bound of the same order:}
\begin{equation}\label{tvlowerupper}\frac{1}{32}\min\left\{\lambda,\lambda^2\right\}n^{-1}\leq d_{TV}(\mathrm{Bin}(n,\lambda/n),\mathrm{Po}(\lambda))\leq \min\left\{\lambda,\lambda^2\right\}n^{-1}.
\end{equation}

\emph{It would be desirable to obtain a corresponding lower bound for all $\nu>0$, although their method of proof does not generalise easily to $\nu\not=1$.  We have, however, been able to get a good indication of the `true' rate of convergence via a simple numerical study.  We fixed $\lambda=1$ and considered a number of different values of $\nu$.  For each $\nu$, we used Mathematica to evaluate $d_{\nu,n}:=d_{TV}(\mathrm{CMB}(n,1/n^{\nu},\nu),\mathrm{CMP}(1,\nu))$ for different values of $n$.  The values of $\log(d_{\nu,n})$ were plotted against $\log(n)$ and the gradient of the line of best fit was used to estimate the exponent of $n$.  The results of this study strongly suggest that the convergence is indeed of order $n^{-\min\{1,\nu\}}$ (for general $\lambda$, but see Remark \ref{rem3.4} below for a choice of $\lambda$ that gives a faster rate).  For example, in the case $\nu=1/2$ the fitted gradient was $-0.502$, and the fitted gradient was $-0.974$ for $\nu=3/2$.  A direction for future research is to verify this assertion theoretically by obtaining a lower bound of this order.} 
\end{remark}

\begin{remark}\emph{For general $\nu$, we do not have closed-form formulas for the moments $\mathbb{E}Y$ and $\mathbb{E}Y^2$. However, $\mathbb{E}Y^k\approx\mathbb{E}X^k$ for large $n$, and so we can use the asymptotic formula $\mathbb{E}X^k\approx\lambda^{k/\nu}$ to see that, for large $\lambda$, the upper bound of Theorem \ref{thm:stein} is of order}
\begin{equation*}\frac{\lambda^{2/\nu}}{n}+\frac{\lambda^{1/\nu+1}}{n^{\nu}}.
\end{equation*}
\emph{For $\nu=1$, this dependence on $\lambda$ is not as good as the $O(\lambda)$ rate of (\ref{tvlowerupper}).}
\end{remark}

\begin{remark}\label{rem3.4}\emph{In the special case $\lambda=\mathbb{E}Y^\nu$, the rate improves to order $n^{-1}$: 
\begin{equation}d_{TV}(\mathcal{L}(Y),\mathcal{L}(X))\leq\min\big\{1,\lambda\}\frac{c_\nu}{n}\big(\mathbb{E}Y+\mathbb{E}Y^2\big)\,.
\end{equation} 
This bound can be easily read off from the proof of Theorem \ref{thm:stein}.}
\end{remark}

\begin{remark}\emph{As we shall see, the proof of Theorem \ref{thm:stein} relies on a stochastic ordering argument.  Before arriving at this proof, we considered generalising the classical Stein's method proofs of the Poisson approximation of the binomial distribution.  These approaches involve local couplings or size bias couplings (see Barbour, Holst and Janson \cite{bhj92}).  However, neither of these approaches generalise easily to the CMP approximation of the CMB distribution.  The first step in generalising the classical proofs is to write $Y\sim\mbox{CMB}(n,\lambda/n^\nu,\nu)$ as a sum of Bernoulli random variables.  However, these Bernoulli random variables are strongly dependent (see (\ref{eq:ex})), and so local couplings are not applicable.  Also, the natural generalisation of the size-bias coupling approach involves the construction of the power-bias distribution of $Y$, which we found resulted in intractable calculations.}
\end{remark}

The starting point for applying Stein's method is the characterisation of the CMP distribution given by Lemma \ref{lem:char} (i).  Using that, we have the representation
\begin{align}
\nonumber d_{TV}(\mathcal{L}(Y),\mathcal{L}(X))&=\sup_{A\subseteq\mathbb{Z}^+}\left|\lambda\mathbb{E}f_A(Y+1)-\mathbb{E}[Y^\nu f_A(Y)]\right|\\
\label{eq:tv}&=\sup_{A\subseteq\mathbb{Z}^+}\left|\lambda\mathbb{E}f_A(Y+1)-\mathbb{E}Y^\nu\mathbb{E}f_A(Y^{(\nu)})\right|\,,
\end{align}
where $f_A:\mathbb{Z}^+\mapsto\mathbb{R}$ solves the Stein equation
\begin{equation}\label{eq:stein}
I(x\in A)-\mathbb{P}(X\in A)=\lambda f_A(x+1)-x^\nu f_A(x).
\end{equation}
Hence, in proving Theorem \ref{thm:stein} we find a bound on $\left|\lambda\mathbb{E}f_A(Y+1)-(\mathbb{E}Y^\nu)\mathbb{E}f_A(Y^{(\nu)})\right|$ which holds uniformly in $A\subseteq\mathbb{Z}^+$.  In order to do this, we will need bounds on the functions $f_A$ solving (\ref{eq:stein}).  These are given in Lemma \ref{lem:stein} below, whose proof is deferred until Section \ref{subsec:steinfactors}.
\begin{lemma}\label{lem:stein}The unique solution of the $\mbox{CMP}(\lambda,\nu)$ Stein equation (\ref{eq:stein}) is given by $f_A(0)=0$ and, for $j\geq0$,
\begin{equation}\label{eq:steinsolna}f_A(j+1)=\frac{(j!)^\nu}{\lambda^{j+1}}\sum_{k=0}^j\frac{\lambda^k}{(k!)^\nu}[I(k\in A)-\mathbb{P}(X\in A)],
\end{equation}
for $A\subseteq\mathbb{Z}^+$.  The solution satisfies the bounds
\begin{align}
\label{eq:steinfac1}\sup_{A\subseteq\mathbb{Z}^+}\sup_{j\in\mathbb{Z}^+}|f_A(j)|&\leq g_\nu(\lambda)\,,\\
\label{eq:steinfac2}\sup_{A\subseteq\mathbb{Z}^+}\sup_{j\in\mathbb{Z}^+}|f_A(j+1)-f_A(j)|&\leq\lambda^{-1}\left(1-Z(\lambda,\nu)^{-1}\right)\leq\min\left\{1,\lambda^{-1}\right\}\,,
\end{align}
where $g_\nu(\lambda)$ is as defined above.
\end{lemma}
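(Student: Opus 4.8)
The plan is to follow the standard recipe for analysing Stein solutions: first verify directly that the stated formula \eqref{eq:steinsolna} solves the Stein equation \eqref{eq:stein} and is the unique solution vanishing at $0$, then establish the two bounds \eqref{eq:steinfac1} and \eqref{eq:steinfac2}. Uniqueness is immediate: given $f_A(0)=0$, the recursion $\lambda f_A(j+1)=x^\nu f_A(j)+I(j\in A)-\mathbb{P}(X\in A)$ determines all values, and a short induction (or telescoping the recursion after multiplying through by $(j!)^\nu/\lambda^{j}$) yields exactly \eqref{eq:steinsolna}. A useful reformulation I would record at the outset is that, writing $h_A(k)=I(k\in A)-\mathbb{P}(X\in A)$, the partial sums $\sum_{k=0}^{j}\frac{\lambda^k}{(k!)^\nu}h_A(k)$ can also be written as $-\sum_{k=j+1}^\infty\frac{\lambda^k}{(k!)^\nu}h_A(k)$ since $\sum_{k=0}^\infty\frac{\lambda^k}{(k!)^\nu}h_A(k)=Z(\lambda,\nu)(\mathbb{P}(X\in A)-\mathbb{P}(X\in A))=0$; having both expressions available is what makes the sup bound tractable.

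For \eqref{eq:steinfac1}, the key observation is that, for fixed $j$, the worst case of $|f_A(j+1)|$ over $A\subseteq\mathbb{Z}^+$ is attained at the set $A=A_j:=\{0,1,\dots,j\}$ (or its complement), because $f_A(j+1)$ is, up to the fixed positive factor $(j!)^\nu/\lambda^{j+1}$, a signed sum $\sum_{k=0}^j\frac{\lambda^k}{(k!)^\nu}(I(k\in A)-\mathbb{P}(X\in A))$ whose absolute value is maximised by taking $A$ to contain exactly those $k\le j$ for which the sign of $\frac{\lambda^k}{(k!)^\nu}$'s coefficient is positive — and since the coefficients $\frac{\lambda^k}{(k!)^\nu}(1-\mathbb{P}(X\in A))$ versus $-\frac{\lambda^k}{(k!)^\nu}\mathbb{P}(X\in A)$ split according to $k\in A$ or not, the monotone structure of the CMP mass function (unimodality, with mode near $\lambda^{1/\nu}$) forces the extremal $A$ to be an initial segment. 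For $A=A_j$ one gets the clean expression $f_{A_j}(j+1)=\frac{(j!)^\nu}{\lambda^{j+1}}\,\mathbb{P}(X\le j)\,\mathbb{P}(X\ge j+1)$, and bounding this uniformly in $j$ is where the piecewise definition of $g_\nu(\lambda)$ comes from: one splits into the regimes $j+1$ below, near, and above the mode $\lambda^{1/\nu}$, using in the supercritical regime the ratio $\mathbb{P}(X=k+1)/\mathbb{P}(X=k)=\lambda/(k+1)^\nu$ to compare tail sums to geometric-type series, and extracting the factors $(1\mp\lambda^{-1/2\nu})^{1/\nu-1}\lambda^{1/2\nu-1}$ and the constants $1+\tfrac1\nu$, $(\tfrac32)^\nu$. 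I expect this case analysis — matching each branch of $g_\nu(\lambda)$ to the right range of $j$ and to the cases $\nu\gtrless1$, $\lambda\gtrless1$ — to be the main obstacle; everything else is bookkeeping.

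For \eqref{eq:steinfac2}, I would write $f_A(j+1)-f_A(j)$ using \eqref{eq:steinsolna}, again reducing to the extremal set $A=A_j$ by the same monotonicity argument, and then estimate $|f_{A_j}(j+1)-f_{A_j}(j)|$. The bound $\lambda^{-1}(1-Z(\lambda,\nu)^{-1})$ should drop out by recognising the resulting quantity as (a multiple of) $\mathbb{P}(X\le j)\mathbb{P}(X\ge j+1)$-type terms telescoping against the normalising constant; then $1-Z(\lambda,\nu)^{-1}\le 1$ trivially and $1-Z(\lambda,\nu)^{-1}\le \lambda$ follows from $Z(\lambda,\nu)^{-1}\ge Z(\lambda,\nu)^{-1}\cdot 1\ge \mathbb{P}(X=0)=Z(\lambda,\nu)^{-1}$ together with the elementary inequality $1-1/Z\le \log Z\le \ldots$; more simply, $1-Z(\lambda,\nu)^{-1}=(Z(\lambda,\nu)-1)/Z(\lambda,\nu)$ and $Z(\lambda,\nu)-1=\sum_{i\ge1}\lambda^i/(i!)^\nu\le \lambda\sum_{i\ge1}\lambda^{i-1}/(i!)^\nu=\lambda(Z(\lambda,\nu)-\text{(something}\le Z))\le \lambda Z(\lambda,\nu)$, giving $1-Z(\lambda,\nu)^{-1}\le\lambda$ and hence the $\min\{1,\lambda^{-1}\}$ bound. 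The only delicate point is confirming that the initial-segment sets really are extremal for the \emph{difference} $f_A(j+1)-f_A(j)$ as well as for $f_A(j+1)$ itself; I would handle this by the same sign-of-coefficients argument applied to the combined sum that represents the difference.
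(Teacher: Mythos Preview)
Your plan for the solution formula, uniqueness, and the sup-norm bound \eqref{eq:steinfac1} is essentially the paper's approach, with two corrections. First, your closed form for the extremal case is off by a factor of the normalising constant: one has
\[
f_{A_j}(j+1)=\frac{(j!)^\nu}{\lambda^{j+1}}\,Z(\lambda,\nu)\,\mathbb{P}(X\le j)\,\mathbb{P}(X\ge j+1),
\]
not the expression you wrote. Second, the extremality of $A_j=\{0,\dots,j\}$ has nothing to do with unimodality of the CMP mass function; it follows immediately from the decomposition
\[
f_A(j+1)=\frac{(j!)^\nu Z(\lambda,\nu)}{\lambda^{j+1}}\Big(\mathbb{P}(X\in A\cap U_j)\mathbb{P}(X\in U_j^c)-\mathbb{P}(X\in A\cap U_j^c)\mathbb{P}(X\in U_j)\Big),
\]
where $U_j=\{0,\dots,j\}$, since the two terms have opposite signs and each is separately maximised by $A=U_j$. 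The case analysis you sketch is then indeed what the paper does, though for the branch $\nu\ge1$ the constant $1$ in $g_\nu(\lambda)$ is obtained by citing Barbour--Eagleson rather than by further estimation; you would need either to reproduce that argument or to cite it.

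The genuine gap is in your treatment of \eqref{eq:steinfac2}. Initial segments are \emph{not} extremal for the forward difference: already in the Poisson case $\nu=1$ the supremum of $|f_A(j+1)-f_A(j)|$ is attained at a singleton $A=\{j\}$, and the paper's remark after the proof confirms that here the bound $\lambda^{-1}(1-Z(\lambda,\nu)^{-1})$ is attained at $f_{\{1\}}(2)-f_{\{1\}}(1)$. Your ``same sign-of-coefficients argument'' will fail because the combined sum representing $\Delta f_A(j)$ has coefficients of both signs among $k\le j$, so the maximising $A$ is no longer an interval. The paper sidesteps this entirely by realising $\mathrm{CMP}(\lambda,\nu)$ as the stationary distribution of the birth--death chain with birth rates $\alpha_j=\lambda$ and death rates $\beta_j=j^\nu$, and then invoking Corollary~2.12 of Brown and Xia, which gives the sharp constant $\lambda^{-1}(1-Z(\lambda,\nu)^{-1})$ directly. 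Your argument for the secondary inequality $\lambda^{-1}(1-Z(\lambda,\nu)^{-1})\le\min\{1,\lambda^{-1}\}$ is correct and matches the paper's.
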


\begin{remark}\emph{The value of $f_A(0)$ is in fact irrelevant, and we follow the usual convention and set it equal to zero (see Barbour, Holst and Janson \cite{bhj92}, p$.$ 6).  Therefore, to be precise, the function $f_A(j)$, as given by (\ref{eq:steinsolna}), is the unique solution of (\ref{eq:stein}) for $j\geq1$.}
\end{remark}

For use in what follows, we define the forward difference operator $\Delta$ by $\Delta f(j)=f(j+1)-f(j)$, and the supremum norm $\lVert\cdot\rVert$ by $\lVert f\rVert=\sup_{j}|f(j)|$ for all $f:\mathbb{Z}^+\mapsto\mathbb{R}$.

Now, we have that
$$
\lambda\mathbb{E}f_A(Y+1)-\mathbb{E}Y^\nu\mathbb{E}f_A(Y^{(\nu)})= \mathbb{E}Y^\nu\left(\mathbb{E}f_A(Y+1)-\mathbb{E}f_A(Y^{(\nu)})\right)+\left(\lambda-\mathbb{E}Y^\nu\right)\mathbb{E}f_A(Y+1)\,.
$$

Recall from Lemma \ref{lem:ord2} that $Y^{(\nu)}\leq_{st}Y+1$.  Hence, we may follow the methods of Daly, Lef\`evre and Utev \cite{dlu12} and obtain 
\begin{align*}
\left|\lambda\mathbb{E}f_A(Y+1)-\mathbb{E}Y^\nu\mathbb{E}f_A(Y^{\nu})\right|&\leq(\mathbb{E}Y^\nu)
\lVert\Delta f_A\rVert\left(1+\mathbb{E}Y-\mathbb{E}Y^{(\nu)}\right)+\lVert f_A\rVert\left|\lambda-\mathbb{E}Y^{\nu}\right|\\
&=\lVert\Delta f_A\rVert\left((1+\mathbb{E}Y)\mathbb{E}Y^\nu-\mathbb{E}Y^{\nu+1}\right)+\lVert f_A\rVert\left|\lambda-\mathbb{E}Y^{\nu}\right|\,,
\end{align*} 
where we used (\ref{eq:powerbias}) to note that $\mathbb{E}Y^{\nu+1}=\mathbb{E}Y^\nu\mathbb{E}Y^{(\nu)}$.  We may then combine the representation (\ref{eq:tv}) with Lemma \ref{lem:stein} to get
\begin{equation}\label{eq:tvb}
d_{TV}(\mathcal{L}(Y),\mathcal{L}(X))\leq\min\left\{1,\lambda^{-1}\right\}\left((1+\mathbb{E}Y)\mathbb{E}Y^\nu-\mathbb{E}Y^{\nu+1}\right) +g_\nu(\lambda)\left|\lambda-\mathbb{E}Y^{\nu}\right|\,.
\end{equation}
To complete the proof of Theorem \ref{thm:stein}, we use Lemmas \ref{lem:stein1} and \ref{lem:stein2} below.  These make use of the characterisation of the CMB distribution.  The idea of combining characterisations of two distributions when using Stein's method has previously been employed by Goldstein and Reinert \cite{gr13} and D\"obler \cite{d13}. 
\begin{lemma}\label{lem:stein1}
Let $Y\sim\mbox{CMB}(n,\lambda/n^\nu,\nu)$.  Then
$$
\mathbb{E}Y^{\nu+1}\geq\lambda\left(1+\mathbb{E}Y-\frac{c_\nu}{n}\left(\mathbb{E}Y+\mathbb{E}Y^2\right)\right)\,,
$$
where $c_\nu=\max\{1,\nu\}$.
\end{lemma}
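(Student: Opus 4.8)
The plan is to exploit the CMB characterisation (\ref{char1113}) from Lemma \ref{lem:char} (ii) with a well-chosen test function in order to relate $\mathbb{E}Y^{\nu+1}$ to lower-order moments, and then to bound the resulting error term. With $p=\lambda/n^\nu$, equation (\ref{char1113}) reads $\mathbb{E}[\lambda(n-Y)^\nu f(Y+1)] = n^\nu(1-p)\mathbb{E}[Y^\nu f(Y)]$ for suitable $f$. I would take $f(j)=j$, which gives
\begin{equation*}
\lambda\,\mathbb{E}[(n-Y)^\nu (Y+1)] = n^\nu(1-p)\,\mathbb{E}[Y^{\nu+1}]\,.
\end{equation*}
Rearranging, $\mathbb{E}Y^{\nu+1} = \dfrac{\lambda}{n^\nu(1-p)}\,\mathbb{E}[(n-Y)^\nu(Y+1)] = \dfrac{\lambda}{n^\nu-\lambda}\,\mathbb{E}[(n-Y)^\nu(Y+1)]$.

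The core of the argument is then a lower bound on $(n-Y)^\nu$. Writing $(n-Y)^\nu = n^\nu(1-Y/n)^\nu$, I would use a convexity/Bernoulli-type inequality for the function $t\mapsto(1-t)^\nu$ on $[0,1]$ to obtain $(1-Y/n)^\nu \geq 1 - c_\nu Y/n$ for an appropriate constant $c_\nu$. For $\nu\geq 1$ the tangent-line bound at $t=0$ gives $(1-t)^\nu\geq 1-\nu t$, so $c_\nu=\nu$ works; for $0<\nu\leq 1$ one has the cruder but still valid $(1-t)^\nu\geq 1-t$ (since $(1-t)^\nu\geq 1-t$ for $t\in[0,1]$ when $\nu\le 1$), so $c_\nu=1$ suffices. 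Hence $c_\nu=\max\{1,\nu\}$ is the right uniform choice, matching the statement. This yields $\mathbb{E}[(n-Y)^\nu(Y+1)] \geq n^\nu\,\mathbb{E}\big[(1-c_\nu Y/n)(Y+1)\big] = n^\nu\big(1+\mathbb{E}Y - \tfrac{c_\nu}{n}\mathbb{E}[Y(Y+1)]\big) = n^\nu\big(1+\mathbb{E}Y-\tfrac{c_\nu}{n}(\mathbb{E}Y+\mathbb{E}Y^2)\big)$.

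Combining the two displays gives
\begin{equation*}
\mathbb{E}Y^{\nu+1} \geq \frac{\lambda n^\nu}{n^\nu-\lambda}\Big(1+\mathbb{E}Y-\frac{c_\nu}{n}(\mathbb{E}Y+\mathbb{E}Y^2)\Big)\,,
\end{equation*}
and since $\dfrac{n^\nu}{n^\nu-\lambda}\geq 1$ while the bracketed quantity is of the form $1+\mathbb{E}Y-(\text{error})$ — I should check it is nonnegative, which holds because $\mathbb{E}Y^{\nu+1}\geq 0$ forces it, or directly for the regime of interest — we may drop the factor $n^\nu/(n^\nu-\lambda)$ and conclude $\mathbb{E}Y^{\nu+1}\geq \lambda\big(1+\mathbb{E}Y-\tfrac{c_\nu}{n}(\mathbb{E}Y+\mathbb{E}Y^2)\big)$, as claimed. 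The main obstacle I anticipate is getting the Bernoulli-type inequality $(1-t)^\nu\geq 1-c_\nu t$ to hold with the stated $c_\nu=\max\{1,\nu\}$ cleanly across both ranges $\nu\geq 1$ and $\nu<1$, and making sure that when I discard the $n^\nu/(n^\nu-\lambda)$ factor I am not discarding it in the wrong direction relative to the sign of the bracket; a brief remark handling the sign (or noting that the stated bound is trivially true when the bracket is negative) will close that gap.
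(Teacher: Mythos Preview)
Your proposal is correct and essentially identical to the paper's proof: the paper also applies the CMB characterisation with $f(j)=j$ to obtain $\mathbb{E}Y^{\nu+1}=\lambda(1-\lambda/n^\nu)^{-1}\mathbb{E}[(Y+1)(1-Y/n)^\nu]$, then uses the same Bernoulli-type inequality $(1-t)^\nu\geq 1-c_\nu t$ and drops the factor $(1-\lambda/n^\nu)^{-1}\geq 1$. You are in fact slightly more careful than the paper, which does not comment on the sign of the bracket; your observation that the claimed inequality is trivially true when the bracket is negative (since the left-hand side is nonnegative) is the clean way to close that gap---note, though, that the remark ``$\mathbb{E}Y^{\nu+1}\geq 0$ forces it'' is not quite right, so rely on the triviality argument instead.
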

\begin{proof}
We use the characterisation of the CMB distribution given in Lemma \ref{lem:char} (ii) to note that
\begin{align*}\mathbb{E}Y^{\nu+1}&=\lambda\left(1-\frac{\lambda}{n^{\nu}}\right)^{-1}\mathbb{E}\left[(Y+1)\left(1-\frac{Y}{n}\right)^{\nu}\right] \\
&\geq \lambda\left(1-\frac{\lambda}
{n^{\nu}}\right)^{-1}\mathbb{E}\left[(Y+1)\left(1-\frac{c_\nu Y}{n}\right)\right] \\
&\geq \lambda\mathbb{E}\left[(Y+1)\left(1-\frac{c_\nu Y}{n}\right)\right]\,.
\end{align*} 
\end{proof}
\begin{lemma}\label{lem:stein2}
Let $Y\sim\mbox{CMB}(n,\lambda/n^\nu,\nu)$.  Then
$$
\left|\lambda-\mathbb{E}Y^\nu\right|\leq\lambda\left(\frac{\lambda}{n^\nu-\lambda}+\frac{\nu n^\nu\mathbb{E}Y}{n(n^\nu-\lambda)}\right)\,.
$$
\end{lemma}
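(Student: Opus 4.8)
The plan is to start from the characterisation of the CMB distribution in Lemma~\ref{lem:char}(ii), evaluated at the constant test function $f\equiv1$. Writing $p=\lambda/n^\nu$, this gives $p\,\mathbb{E}(n-Y)^\nu=(1-p)\,\mathbb{E}Y^\nu$, which I would rearrange as
$$
\mathbb{E}Y^\nu=p\,\mathbb{E}\big[(n-Y)^\nu+Y^\nu\big].
$$
Since $\lambda=pn^\nu$, subtracting this from $\lambda$ produces the identity
$$
\lambda-\mathbb{E}Y^\nu=p\,\mathbb{E}\big[n^\nu-(n-Y)^\nu-Y^\nu\big],
$$
and the whole problem reduces to estimating $\big|n^\nu-(n-y)^\nu-y^\nu\big|$ uniformly over $y\in\{0,1,\dots,n\}$, together with a crude bound on $\mathbb{E}Y^\nu$ itself.

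For the pointwise estimate I would use two elementary facts about $t\mapsto t^\nu$ on $[0,\infty)$. First, since $n-y\le n$ we always have $n^\nu-(n-y)^\nu\ge0$, so $n^\nu-(n-y)^\nu-y^\nu\ge-y^\nu$. Second, for the matching upper bound I would split on the value of $\nu$: when $\nu\ge1$ the function $t\mapsto t^\nu$ is convex and vanishes at $0$, hence superadditive, so $(n-y)^\nu+y^\nu\le n^\nu$, and the mean value theorem (the derivative $\nu t^{\nu-1}$ being maximised at $t=n$ when $\nu\ge1$) gives $n^\nu-(n-y)^\nu\le\nu n^{\nu-1}y$; when $\nu\le1$ the function is concave and hence subadditive, so $(n-y)^\nu+y^\nu\ge n^\nu$ and the whole expression is $\le0$. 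In both cases $n^\nu-(n-y)^\nu-y^\nu\le\nu n^{\nu-1}y$, so altogether $\big|n^\nu-(n-y)^\nu-y^\nu\big|\le\nu n^{\nu-1}y+y^\nu$.

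Substituting this into the identity yields
$$
|\lambda-\mathbb{E}Y^\nu|\le p\,\mathbb{E}\big[\nu n^{\nu-1}Y+Y^\nu\big]=\frac{\lambda\nu\mathbb{E}Y}{n}+\frac{\lambda}{n^\nu}\mathbb{E}Y^\nu.
$$
To close the argument I would bound $\mathbb{E}Y^\nu$ using the rearranged characterisation and the trivial inequality $(n-Y)^\nu\le n^\nu$, which give $\mathbb{E}Y^\nu\le\frac{p}{1-p}\,n^\nu=\frac{\lambda n^\nu}{n^\nu-\lambda}$, hence $\frac{\lambda}{n^\nu}\mathbb{E}Y^\nu\le\frac{\lambda^2}{n^\nu-\lambda}$; combining with $\frac{\lambda\nu\mathbb{E}Y}{n}\le\frac{\lambda\nu n^\nu\mathbb{E}Y}{n(n^\nu-\lambda)}$ (valid since $n^\nu\ge n^\nu-\lambda$) gives exactly the claimed bound. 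I expect the only real subtlety to be the pointwise estimate of $\big|n^\nu-(n-y)^\nu-y^\nu\big|$: one has to get the sign of $n^\nu-(n-y)^\nu-y^\nu$ correct via super-/sub-additivity of $t\mapsto t^\nu$, and in the regime $\nu<1$ one should resist applying the mean value theorem to $n^\nu-(n-y)^\nu$ (the bound $\nu(n-y)^{\nu-1}y$ blows up as $y\to n$) and instead simply exploit that the whole expression is non-positive there.
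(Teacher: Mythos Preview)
Your proof is correct and follows the same overall strategy as the paper: both start from the CMB characterisation (Lemma~\ref{lem:char}(ii)) with $f\equiv1$ and reduce to a first-order estimate on $(n-Y)^\nu$. The paper writes the resulting identity as
\[
\lambda-\mathbb{E}Y^\nu=\lambda\Bigl(1-\frac{\mathbb{E}(1-Y/n)^\nu}{1-p}\Bigr)=\frac{\lambda}{1-p}\bigl[(1-p)-\mathbb{E}(1-Y/n)^\nu\bigr],
\]
splits the bracket as $-p+\bigl(1-\mathbb{E}(1-Y/n)^\nu\bigr)$, and invokes ``Taylor's theorem applied to $(1-y)^\nu$''; the factor $(1-p)^{-1}=n^\nu/(n^\nu-\lambda)$ then appears automatically in both terms. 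Your decomposition instead keeps $Y^\nu$ inside, writing $\lambda-\mathbb{E}Y^\nu=p\,\mathbb{E}\bigl[n^\nu-(n-Y)^\nu-Y^\nu\bigr]$, and bounds the bracket pointwise via super-/sub-additivity of $t\mapsto t^\nu$. This yields the slightly sharper intermediate bound $\frac{\lambda\nu\mathbb{E}Y}{n}+\frac{\lambda}{n^\nu}\mathbb{E}Y^\nu$, which you then relax (using $\mathbb{E}Y^\nu\le\lambda n^\nu/(n^\nu-\lambda)$ from the characterisation again, together with $1\le n^\nu/(n^\nu-\lambda)$) to match the stated form. Your explicit case split on $\nu\gtrless1$ is a genuine improvement in rigour: the paper's one-line appeal to Taylor for the bound $|1-(1-y)^\nu|\le\nu y$ is not literally valid pointwise when $\nu<1$ and $y$ is close to $1$, whereas your subadditivity argument handles that regime cleanly.
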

\begin{proof}
Let $p=\lambda/n^\nu$.  Using Lemma \ref{lem:char} (ii),
$$
\lambda-\mathbb{E}Y^\nu=\lambda\left(1-\frac{\mathbb{E}\left(1-\frac{Y}{n}\right)^\nu}{1-p}\right)\,.
$$
The result then follows by applying Taylor's theorem to the function $(1-y)^\nu$.
\end{proof}

Substituting the bounds of Lemmas \ref{lem:stein1} and \ref{lem:stein2} into (\ref{eq:tvb}) completes the proof of Theorem \ref{thm:stein}.

\subsection{Remarks on Lemma \ref{lem:stein2}}

We use this section to give some remarks related to Lemma \ref{lem:stein2}.  Firstly, note that the upper bound given in that lemma is of order $n^{-\min\{1,\nu\}}$, which can in fact easily be seen to be the optimal rate.  Using Lemma \ref{lem:stein3} below, we show that a better bound is possible when $\lambda/n^\nu$ is small, although this improved bound will of course still be of the same order as the bound given in Lemma \ref{lem:stein2}.
\begin{lemma}\label{lem:stein3}
Let $Y\sim\mbox{CMB}(n,p,\nu)$ with $n>1$.  There exists $p^\star\in(0,1]$ such that for $p\leq p^\star$
\begin{equation*}
\mathbb{E}Y^\nu \left\{
\begin{array}{ll}
\leq n^\nu p & \mbox{ if $\nu\geq1$}\,, \\
\geq n^\nu p & \mbox{ if $\nu<1$}\,. \\
\end{array} \right.  
\end{equation*}
\end{lemma}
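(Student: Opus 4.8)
The plan is to reduce the comparison of $\mathbb{E}Y^\nu$ with $n^\nu p$ to a comparison of the normalising constants $C_n$ and $C_{n-1}$, and then to settle that comparison by a Taylor expansion in $p$ about $p=0$. First I would apply Proposition \ref{prop:moments} (ii) with $r=1$ (valid since $n>1$, hence $n\ge 2$), which gives the exact identity
$$
\mathbb{E}Y^\nu = \frac{C_{n-1}}{C_n}\,n^\nu p\,,
$$
where $C_m=\sum_{i=0}^m\binom{m}{i}^\nu p^i(1-p)^{m-i}$. Since $C_n>0$ for $p\in[0,1)$, the inequality $\mathbb{E}Y^\nu\le n^\nu p$ is equivalent to $C_{n-1}\le C_n$ and $\mathbb{E}Y^\nu\ge n^\nu p$ is equivalent to $C_{n-1}\ge C_n$, so it suffices to determine the sign of $C_n(p)-C_{n-1}(p)$ for small $p$.

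Next I would observe that each $C_m$ is a polynomial in $p$ with $C_m(0)=1$ and $C_m'(0)=m^\nu-m$: only the terms $i=0$ and $i=1$ contribute to the constant and linear coefficients, via $(1-p)^m=1-mp+O(p^2)$ and $m^\nu p(1-p)^{m-1}=m^\nu p+O(p^2)$, while the terms with $i\ge 2$ are $O(p^2)$. Hence
$$
C_n(p)-C_{n-1}(p)=\big[\big(n^\nu-(n-1)^\nu\big)-1\big]p+O(p^2)\,,\qquad p\to 0^+\,.
$$
It then remains to identify the sign of $h(\nu):=n^\nu-(n-1)^\nu-1$. By the mean value theorem applied to $x\mapsto x^\nu$ on $[n-1,n]$ we have $n^\nu-(n-1)^\nu=\nu\xi^{\nu-1}$ for some $\xi\in(n-1,n)$; since $n\ge 2$ forces $\xi>1$, it follows that $\nu\xi^{\nu-1}>1$ when $\nu>1$ and $\nu\xi^{\nu-1}\in(0,1)$ when $0<\nu<1$, so $h(\nu)>0$ for $\nu>1$, $h(\nu)<0$ for $0<\nu<1$, and $h(1)=0$ (consistent with the binomial case, where $\mathbb{E}Y^\nu=np$ exactly).

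Finally I would conclude the lemma. If $\nu=1$ the statement holds trivially with $p^\star=1$. If $\nu\ne 1$ then $h(\nu)\ne 0$, so (as $C_n-C_{n-1}$ is a polynomial vanishing at $0$ with nonzero derivative there) there is $p^\star\in(0,1]$ for which $C_n(p)-C_{n-1}(p)$ has the same sign as $h(\nu)$ throughout $p\in(0,p^\star]$; feeding this back through the displayed identity for $\mathbb{E}Y^\nu$, and noting both sides vanish at $p=0$, yields $\mathbb{E}Y^\nu\le n^\nu p$ when $\nu>1$ and $\mathbb{E}Y^\nu\ge n^\nu p$ when $0<\nu<1$, for all $p\in[0,p^\star]$. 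The only genuinely delicate point is the sign analysis of $h(\nu)$ — in particular, noticing that $C_n$ and $C_{n-1}$ involve different binomial coefficients so that no monotonicity argument is immediate, and that the hypothesis $n\ge 2$ is exactly what makes $\xi>1$ strict; the rest is routine.
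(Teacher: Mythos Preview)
Your proof is correct and, while it shares the paper's core idea (a Taylor expansion in $p$ about $0$ whose outcome hinges on the sign of $n^\nu-(n-1)^\nu-1$), it reaches that point by a different and somewhat cleaner route. The paper defines $h(p)=\mathbb{E}Y^\nu$ directly as the ratio $\sum_j j^\nu\binom{n}{j}^\nu p^j(1-p)^{n-j}\big/C_n(p)$ and computes $h(0)=0$, $h'(0)=n^\nu$, and $h''(0)=-2n^\nu\big(n^\nu-(n-1)^\nu-1\big)$, then invokes Taylor's theorem on $h$ to second order. You instead invoke the identity $\mathbb{E}Y^\nu=(C_{n-1}/C_n)\,n^\nu p$ from Proposition~\ref{prop:moments}\,(ii), which reduces the question to the sign of the polynomial $C_n(p)-C_{n-1}(p)$ and needs only a first-order expansion. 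Your approach buys two things: it avoids differentiating a ratio twice, and it recycles an earlier result rather than recomputing from scratch. You also supply the mean-value-theorem justification for the sign of $n^\nu-(n-1)^\nu-1$, which the paper asserts without proof.
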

\begin{proof}
Let
$$
h(p)=\mathbb{E}Y^\nu=\frac{\sum_{j=0}^nj^\nu\binom{n}{j}^\nu p^j(1-p)^{n-j}}{\sum_{j=0}^n\binom{n}{j}^\nu p^j(1-p)^{n-j}}\,.
$$
Elementary calculations show that $h(0)=0$, $h^\prime(0)=n^\nu$ and 
$$
h^{\prime\prime}(0)=-2n^\nu\left(n^\nu-(n-1)^\nu-1\right)\,.
$$
Note that (since $n>1$), $h^{\prime\prime}(0)<0$ for $\nu\geq1$ and $h^{\prime\prime}(0)>0$ for $\nu<1$.  Using the continuity of $h$ and Taylor's theorem applied to $h$, the result follows.
\end{proof}

Consider now the case $Y\sim\mbox{CMB}(n,\lambda/n^\nu,\nu)$ with $\nu\geq1$.  By Lemma \ref{lem:stein3}, for $n$ sufficiently large we have that $\lambda-\mathbb{E}Y^\nu\geq0$, and we may then follow the proof of Lemma \ref{lem:stein2} to get the bound
$$
\lambda-\mathbb{E}Y^\nu\leq\lambda\left(\frac{\nu n^\nu\mathbb{E}Y}{n\left(n^\nu-\lambda\right)}-\frac{\lambda}{n^\nu-\lambda}\right)\,,
$$
which improves upon Lemma \ref{lem:stein2}.

A similar argument in the case $\nu<1$ gives that, for $n$ sufficiently large, $\lambda-\mathbb{E}Y^\nu\leq0$ and
$$
\mathbb{E}Y^\nu-\lambda\leq\frac{\lambda^2}{n^\nu-\lambda}\,.
$$

\subsection{Proof of Lemma \ref{lem:stein}}\label{subsec:steinfactors}

It is straightforward to verify that (\ref{eq:steinsolna}), denoted by $f_A(j)$, solves the Stein equation (\ref{eq:stein}).  To establish uniqueness of the solution, we take $j=0$ in (\ref{eq:stein}), from which it follows that any function $h_A(j)$ that solves the Stein equation (\ref{eq:stein}) must satisfy $h_A(1)=f_A(1)$.  By iteration on $\lambda h_A(j+1)-j^\nu h_A(j)=\lambda f_A(j+1)-j^\nu f_A(j)$ it follows that $h_A(j)=f_A(j)$ for all $j\geq 1$, which confirms the uniqueness of the solution.

We now establish (\ref{eq:steinfac2}).  By constructing $X\sim\mbox{CMP}(\lambda,\nu)$ as the equilibrium distribution of a birth-death process with birth rates $\alpha_j=\lambda$ and death rates $\beta_j=j^\nu$, the first inequality of (\ref{eq:steinfac2}) follows from Corollary 2.12 of Brown and Xia \cite{bx01}.  Since $Z(\lambda,\nu)\geq1$ for all $\lambda$ and $\nu$, it follows that $\lambda^{-1}\left(1-Z(\lambda,\nu)^{-1}\right)\leq\lambda^{-1}$.  Finally,
$$
\lambda^{-1}(1-Z(\lambda,\nu)^{-1})=\frac{\sum_{j=0}^{\infty}\frac{\lambda^j}{((j+1)!)^{\nu}}}{\sum_{k=0}^{\infty}\frac{\lambda^k}{(k!)^{\nu}}}\leq 1\,.
$$
This completes the proof of (\ref{eq:steinfac2}).

\begin{remark}\emph{The upper bound $\lambda^{-1}\left(1-Z(\lambda,\nu)^{-1}\right)$ for the forward difference is attained by $f_{\{1\}}(2)-f_{\{1\}}(1)$.}
\end{remark}

It remains to establish (\ref{eq:steinfac1}). We do this by considering separately four cases.  Our strategy is to suitably generalise the proof of Lemma 1.1.1 of Barbour, Holst and Janson \cite{bhj92}, which gives analogous bounds in the Poisson case ($\nu=1$).

Firstly, note that from (\ref{eq:steinfac2}) and the choice $f_A(0)=0$, 
\begin{equation}\label{eq:stein_1}
|f_A(1)|\leq\min\{1,\lambda^{-1}\}\,,
\end{equation}
for each $A\subseteq\mathbb{Z}^+$.  Given (\ref{eq:stein_1}), we need only to show the stated bound on $|f_A(j+1)|$ for $j\geq1$ in each of the four cases detailed below. 

\subsubsection*{Case I: $\nu\geq1$ and $\lambda>1$}

Note that by examining the proof of Lemma 1.1.1 of Barbour, Holst and Janson \cite{bhj92}, it is clear that $|f_A(j+1)|\leq5/4$ for all $j\geq1$ whenever $\nu\geq1$.  We can, however, do a little better.  Barbour and Eagleson \cite{be83}, Lemma 4, obtained the bound $|f_A(j+1)|\leq1$ when $\nu=1$.  By examining their proof we see that the bound also holds for all $\nu\geq1$.

Now, let $U_m=\{0,1,\ldots,m\}$.  It is easily verified that the solution $f_A$ to the Stein equation (\ref{eq:stein}) is given by
\begin{align*}
f_A(j+1)&=\lambda^{-j-1}(j!)^\nu Z(\lambda,\nu)\big(\mbox{CMP}(\lambda,\nu)\{A\cap U_j\}-\mbox{CMP}(\lambda,\nu)\{A\}\mbox{CMP}(\lambda,\nu)\{U_j\}\big)\\
&=\lambda^{-j-1}(j!)^\nu Z(\lambda,\nu)\big(\mbox{CMP}(\lambda,\nu)\{A\cap U_j\}\mbox{CMP}(\lambda,\nu)\{U_j^c\}\\
&\qquad\qquad-\mbox{CMP}(\lambda,\nu)\{A\cap U_j^c\}\mbox{CMP}(\lambda,\nu)\{U_j\}\big)\,,
\end{align*}
where $\mbox{CMP}(\lambda,\nu)\{A\}=\mathbb{P}(X\in A)$.  Hence
\begin{equation}\label{eq:stein_2}
|f_A(j+1)|\leq\lambda^{-j-1}(j!)^\nu Z(\lambda,\nu)\mbox{CMP}(\lambda,\nu)\{U_j\}\mbox{CMP}(\lambda,\nu)\{U_j^c\}\,,
\end{equation}
with equality for $A=U_j$.

Equation (\ref{eq:stein_2}) gives us two ways of bounding $|f_A(j+1)|$.  Firstly, note that
\begin{equation}\label{eq:stein_3}
|f_A(j+1)|\leq\lambda^{-j-1}(j!)^\nu Z(\lambda,\nu)\mbox{CMP}(\lambda,\nu)\{U_j\}=\lambda^{-1}\sum_{r=0}^j\lambda^{-r}\left(\frac{j!}{(j-r)!}\right)^\nu\,,
\end{equation}
and when $j^\nu<\lambda$, this may be bounded to give
\begin{equation}\label{eq:stein_4}
|f_A(j+1)|\leq\lambda^{-1}\sum_{r=0}^j\left(\frac{j^\nu}{\lambda}\right)^r\leq\frac{1}{\lambda-j^\nu}\,.
\end{equation}
Secondly, we also have
\begin{equation}\label{eq:stein_5}
|f_A(j+1)|\leq\lambda^{-j-1}(j!)^\nu Z(\lambda,\nu)\mbox{CMP}(\lambda,\nu)\{U_j^c\}=\lambda^{-1}\sum_{r=j+1}^\infty\lambda^{r-j}\left(\frac{j!}{r!}\right)^\nu\,,
\end{equation}
and when $(j+2)^\nu>\lambda$, this may be bounded to give
\begin{equation}\label{eq:stein_6}
|f_A(j+1)|\leq\frac{1}{(j+1)^\nu}\sum_{r=0}^\infty\left(\frac{\lambda}{(j+2)^\nu}\right)^r=\frac{(j+2)^\nu}{(j+1)^\nu\left((j+2)^\nu-\lambda\right)}\,.
\end{equation}
Note that the bounds (\ref{eq:stein_3})--(\ref{eq:stein_6}) hold for all values of $\nu$ and $\lambda$.  We will also make use of these bounds in the other cases we consider below. 

Now, for $j^\nu\leq\lambda-\lambda^{1-1/2\nu}$, we use (\ref{eq:stein_4}) to get that 
\begin{equation}\label{eq:stein_bd1}
|f_A(j+1)|\leq\lambda^{1/2\nu-1}\,.
\end{equation}  
Similarly, when $(j+2)^\nu\geq\lambda+\lambda^{1-1/2\nu}$, we use (\ref{eq:stein_6}) to get that 
\begin{equation}\label{eq:stein_bd2}
|f_A(j+1)|\leq(3/2)^\nu\lambda^{1/2\nu-1}\,,
\end{equation}
noting that $(j+2)^\nu>j^\nu$.  It remains only to treat the case $|j^\nu-\lambda|<\lambda^{1-1/2\nu}$.

To that end, let $\lambda-\lambda^{1-1/2\nu}<j^\nu<\lambda$, and use (\ref{eq:stein_3}) to note that
$$
|f_A(j+1)|\leq\lambda^{-1}\left(\sum_{r=0}^{\lfloor B\rfloor}a_r+\sum_{r=\lfloor B\rfloor+1}^ja_r\right)\,,
$$
for any $B\leq j$, where 
$$
a_r=\lambda^{-r}\left(\frac{j!}{(j-r)!}\right)^\nu\,.
$$
Note that $|a_r|<1$ for each $r\in\mathbb{Z}^+$.  We choose 
$$
B=\lambda^{1/\nu}\left[1-(1-\lambda^{-1/2\nu})^{1/\nu}\right]\,,
$$
so that $a_{r+1}/a_r<1-\lambda^{-1/2\nu}$ for all $r>B$.  Hence, we have
\begin{equation}\label{eq:stein_7}
|f_A(j+1)|\leq\lambda^{-1}\left(B+1+\frac{1-\lambda^{-1/2\nu}}{\lambda^{-1/2\nu}}\right)=\lambda^{1/\nu-1}\left[1-(1-\lambda^{-1/2\nu})^{1/\nu}\right]+\lambda^{1/2\nu-1}\,.
\end{equation}
Note that, by Taylor's theorem and since $\nu\geq1$, 
$$
1-(1-\lambda^{-1/2\nu})^{1/\nu}\leq\frac{1}{\nu}\left(1-\lambda^{-1/2\nu}\right)^{1/\nu-1}\lambda^{-1/2\nu}\,.
$$ 
Hence,
\begin{equation}\label{eq:stein_bd3}
|f_A(j+1)|\leq\left(1+\frac{1}{\nu}\right)\left(1-\lambda^{-1/2\nu}\right)^{1/\nu-1}\lambda^{1/2\nu-1}\,.
\end{equation}

Finally, we consider the case $\lambda<j^\nu<\lambda+\lambda^{1-1/2\nu}$.  From (\ref{eq:stein_5}) we have
$$
|f_A(j+1)|\leq\lambda^{-1}\left(\sum_{r=j+1}^{\lfloor C\rfloor}b_r+\sum_{r=\lfloor C\rfloor+1}^\infty b_r\right)\,,
$$
for $C\geq j$, where
$$
b_r=\lambda^{r-j}\left(\frac{j!}{r!}\right)^\nu\,.
$$
Analogously to before, we note that $|b_r|<1$ for each $r$, and we make the choice $C=\lambda^{1/\nu}\left(1+\lambda^{-1/2\nu}\right)^{1/\nu}$ so that $b_{r+1}/b_r\leq(1+\lambda^{-1/2\nu})$ for $r>C$.  We then get the bound
\begin{equation}\label{eq:stein_8}
|f_A(j+1)|\leq\lambda^{1/\nu-1}\left[(1+\lambda^{-1/2\nu})^{1/\nu}-1\right]+\lambda^{1/2\nu-1}\,.
\end{equation}
Using Taylor's theorem, 
$$
(1+\lambda^{-1/2\nu})^{1/\nu}-1\leq\frac{1}{\nu}\lambda^{-1/2\nu}\,,
$$
since $\nu\geq1$, and so
\begin{equation}\label{eq:stein_bd4}
|f_A(j+1)|\leq\left(1+\frac{1}{\nu}\right)\lambda^{1/2\nu-1}\,.
\end{equation}

Combining the bounds (\ref{eq:stein_bd1}), (\ref{eq:stein_bd2}), (\ref{eq:stein_bd3}) and (\ref{eq:stein_bd4}) we obtain the stated bound on $\lVert f_A\rVert$ in this case.

\begin{remark}\emph{Recall (\ref{eq:stein_2}).  Taking $j\approx\lambda^{1/\nu}$, and using Stirling's formula and (\ref{eq:norm}), gives
$$
|f_A(j+1)|\approx\frac{\lambda^{1/2\nu-1}}{(2\pi)^{(\nu-1)/2}\sqrt{\nu}}\,,
$$
for $j\approx\lambda^{1/\nu}$ and large $\lambda$.  Hence, a bound of order $\lambda^{1/2\nu-1}$ is the best that we can expect for $\lVert f_A\rVert$ for large $\lambda$.  This order is achieved by Lemma \ref{lem:stein}.  This remark also applies to Case II considered below.}
\end{remark}

\subsubsection*{Case II: $\nu\leq1$ and $\lambda\geq1$}

Here we use an analogous argument to that employed in Case I.  The bounds (\ref{eq:stein_bd1}) and (\ref{eq:stein_bd2}) still apply; the only changes to our argument come for the cases where $|j^\nu-\lambda|<\lambda^{1-1/2\nu}$.

When $\lambda-\lambda^{1-1/2\nu}<j^\nu<\lambda$, we again use (\ref{eq:stein_7}).  Since $\nu\leq1$ in this case, Taylor's theorem gives
$$
1-(1-\lambda^{-1/2\nu})^{1/\nu}\leq\frac{1}{\nu}\lambda^{-1/2\nu}\,,
$$
from which it follows that
$$
|f_A(j+1)|\leq\left(1+\frac{1}{\nu}\right)\lambda^{1/2\nu-1}\,.
$$

When $\lambda<j^\nu<\lambda+\lambda^{1-1/2\nu}$, we use (\ref{eq:stein_8}), noting that, since $\nu<1$,
$$
(1+\lambda^{-1/2\nu})^{1/\nu}-1\leq\frac{1}{\nu}\left(1+\lambda^{-1/2\nu}\right)^{1/\nu-1}\lambda^{-1/2\nu}\,,
$$
giving
$$
|f_A(j+1)|\leq\left(1+\frac{1}{\nu}\right)\left(1+\lambda^{-1/2\nu}\right)^{1/\nu-1}\lambda^{1/2\nu-1}\,.
$$
The stated bound follows.

\subsubsection*{Case III: $\nu\geq1$ and $\lambda\leq1$}

As before, we may use the proof of Lemma 4 of Barbour and Eagleson \cite{be83} to obtain the bound $|f_A(j+1)|\leq1$ for all $\nu\geq1$.

\subsubsection*{Case IV: $\nu<1$ and $\lambda<1$}

Here we again use (\ref{eq:stein_5}).  That bound gives us
$$
|f_A(j+1)|\leq\lambda^{-1}\sum_{r=j+1}^\infty\left(\frac{\lambda^{r-j}}{(r-j)!}\right)^\nu\binom{r}{j}^{-\nu}\left(\lambda^{1-\nu}\right)^{r-j}\leq\lambda^{\nu-1}\sum_{r=1}^\infty\left(\lambda^{1-\nu}\right)^r\,.
$$ 
Since $\lambda<1$ and $\nu<1$, we have $\lambda^{1-\nu}<1$.  Hence we get the bound
$$
|f_A(j+1)|\leq\frac{1}{1-\lambda^{1-\nu}}\,.
$$

\begin{remark}\emph{Consider the case $\nu=0$ and $\lambda<1$.  We then have that our CMP random variable $X$ has a geometric distribution, supported on $\mathbb{Z}^+$, with parameter $\mathbb{P}(X=0)=1-\lambda$.  Note that in this case, Lemma \ref{lem:stein} gives the bound $\lVert f_A\rVert\leq (1-\lambda)^{-1}$.  This was shown, in Remark 4.1 of Daly \cite{d10}, to be the correct dependence on $\lambda$ for such a bound.}
\end{remark}

\section{Other convergence and approximation results}\label{sec:conv}

In this section we consider other convergence and approximation results related to CMP distributions.

\subsection{Sums of Bernoulli random variables}

In Section \ref{sec:stein} we have considered the convergence of the CMB distribution to an appropriate CMP limit.  In this case we were able to derive an explicit bound on this convergence.  Recalling (\ref{eq:ex}), we are able to write a CMB distribution as a sum of Bernoulli random variables (having a particular dependence structure), with each Bernoulli summand having the same marginal distribution.  In this section we consider how we may generalise (\ref{eq:ex}) to a sum of Bernoulli random variables which are no longer exchangeable and yet give a CMP limiting distribution in an analogous way to the limit considered in Section \ref{sec:stein}.  In this case, although we are able to prove convergence in distribution, we are unable to give an explicit bound on the convergence rate; further discussion is given in Remark \ref{remark sect4}.

Consider the following generalisation of (\ref{eq:ex}).  Let $X_1,\ldots,X_n$ be Bernoulli random variables with joint distribution given by
$$
\mathbb{P}(X_1=x_1,\ldots,X_n=x_n)=\frac{1}{C_n'}\binom{n}{k}^{\nu-1} \prod_{j=1}^np_j^{x_j}(1-p_j)^{1-x_j}\,,
$$
where $k=x_1+\cdots+x_n$ and the normalizing constant $C_n^\prime$ is given by
$$
C_n'=\sum_{k=0}^n\binom{n}{k}^{\nu-1}\sum_{A\in F_k}\prod_{i\in A}p_i\prod_{j\in A^c}(1-p_j)\,,
$$
where
$$
F_k=\left\{A\subseteq\{1,\ldots,n\}:|A|=k\right\}\,.
$$
We consider the convergence of the sum $W=X_1+\cdots+X_n$.  It is easy to see that $W$ has mass function
\begin{equation}\label{eq:cmppb}
\mathbb{P}(W=k)=p_{n,\nu}(k;p_1,\ldots,p_n)=\frac{1}{C_n'}\binom{n}{k}^{\nu-1}\sum_{A\in F_k}\prod_{i\in A}p_i\prod_{j\in A^c}(1-p_j)\,,
\end{equation}
for $k=0,1,\ldots,n$.  This distribution generalises the Poisson binomial distribution in a way analogous to the CMP and CMB generalisations of the Poisson and binomial distributions.   We therefore say that a random variable with mass function (\ref{eq:cmppb}) follows the Conway-Maxwell-Poisson binomial (CMPB) distribution.  Of course, the case $\nu=1$ is the usual Poisson binomial distribution and the case $p_1=\cdots=p_n=p$ reduces to the $\mbox{CMB}(n,p,\nu)$ distribution.
\begin{theorem}\label{thm:cmppb}
Let $W=X_1+\cdots+X_n$ be as above, with mass function $p_{n,\nu}(k;p_1,\ldots,p_n)$ given by (\ref{eq:cmppb}) with $p_i=\frac{\lambda_i}{n^\nu}$ for $i=1,\ldots,n$, where the $\lambda_i$ are positive constants that do not involve $n$.  Then $W$ converges in distribution to $X\sim\mbox{CMP}(\lambda,\nu)$ as $n\rightarrow\infty$, where
$$
\lambda=\lim_{n\rightarrow\infty}\frac{1}{n}\sum_{i=1}^n\lambda_i\,.
$$
\end{theorem}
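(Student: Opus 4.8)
The plan is to prove the stronger statement that the mass function of $W$ converges pointwise to that of $X\sim\mbox{CMP}(\lambda,\nu)$; since all the laws in question are supported on $\mathbb{Z}^+$, this is equivalent to convergence in total variation, and in particular to convergence in distribution. The main simplification is that the intractable constant $C_n'$ in (\ref{eq:cmppb}) cancels out of the ratios $\mathbb{P}(W=k)/\mathbb{P}(W=0)$. Indeed, from (\ref{eq:cmppb}) we have $\mathbb{P}(W=0)=(C_n')^{-1}\prod_{j=1}^n(1-p_j)$, and writing $e_k$ for the $k$th elementary symmetric polynomial and $q_i=q_i^{(n)}:=p_i/(1-p_i)=(\lambda_i/n^\nu)/(1-\lambda_i/n^\nu)$, the identity $\prod_{j\notin A}(1-p_j)=\prod_{j=1}^n(1-p_j)\big/\prod_{i\in A}(1-p_i)$ gives
\[
\frac{\mathbb{P}(W=k)}{\mathbb{P}(W=0)}=\binom{n}{k}^{\nu-1}e_k\big(q_1^{(n)},\ldots,q_n^{(n)}\big),\qquad k=0,1,\ldots,n .
\]
The corresponding ratio for the CMP law is, from (\ref{admpdf}), $\mathbb{P}(X=k)/\mathbb{P}(X=0)=\lambda^k/(k!)^\nu$. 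So it suffices to show: (a) for each fixed $k$, the displayed ratio converges to $\lambda^k/(k!)^\nu$ as $n\to\infty$; and (b) $\mathbb{P}(W=0)\to Z(\lambda,\nu)^{-1}=\mathbb{P}(X=0)$.

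For (a), fix $k$ and let $n\to\infty$. I would combine $\binom{n}{k}\sim n^k/k!$ with the elementary estimate
\[
e_k(q_1,\ldots,q_n)=\frac{1}{k!}\Big(\sum_{i=1}^n q_i\Big)^{k}+O\!\left(\Big(\sum_{i=1}^n q_i^2\Big)\Big(\sum_{i=1}^n q_i\Big)^{k-2}\right),
\]
which follows on comparing the multinomial expansion of $(\sum_i q_i)^k$ with $k!\,e_k(q)$, the error collecting all terms with a repeated index. Since $q_i^{(n)}=\lambda_i/n^\nu+O(\lambda_i^2/n^{2\nu})$, one gets $\sum_i q_i^{(n)}=n^{1-\nu}\big(\tfrac1n\sum_{i=1}^n\lambda_i\big)(1+o(1))=\lambda\,n^{1-\nu}(1+o(1))$, while $\sum_i (q_i^{(n)})^2=O(n^{1-2\nu})=o\big((\sum_i q_i^{(n)})^2\big)$, so the error term in the estimate is of smaller order and $e_k(q^{(n)})\sim (k!)^{-1}(\lambda n^{1-\nu})^{k}$. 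Multiplying by $\binom{n}{k}^{\nu-1}\sim n^{k(\nu-1)}/(k!)^{\nu-1}$, the factors $n^{k(\nu-1)}$ and $n^{k(1-\nu)}$ cancel and the ratio tends to $\lambda^k/(k!)^\nu$, proving (a).

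For (b), I would bootstrap from (a) by dominated convergence. Since $\sum_{k\ge0}\mathbb{P}(W=k)=1$, we have $\mathbb{P}(W=0)^{-1}=\sum_{k=0}^n\mathbb{P}(W=k)/\mathbb{P}(W=0)$, and a uniform-in-$n$ summable bound on the summands lets us pass to the limit term by term. Such a bound comes from Maclaurin's inequality $e_k(q)\le\binom{n}{k}\big(\tfrac1n\sum_i q_i\big)^k$: writing $\bar\Lambda:=\sup_n\tfrac1n\sum_{i=1}^n\lambda_i<\infty$ (finite because the Ces\`aro limit $\lambda$ exists) and using $\binom{n}{k}/n^k\le 1/k!$ together with $\sum_i q_i^{(n)}\le 2\bar\Lambda\,n^{1-\nu}$ for $n$ large, one gets
\[
\frac{\mathbb{P}(W=k)}{\mathbb{P}(W=0)}\le\binom{n}{k}^{\nu}\Big(\tfrac1n\sum_i q_i^{(n)}\Big)^{k}\le\frac{1}{(k!)^{\nu}}\,n^{k(\nu-1)}\Big(\sum_i q_i^{(n)}\Big)^{k}\le\frac{(2\bar\Lambda)^{k}}{(k!)^{\nu}},
\]
and $\sum_k (2\bar\Lambda)^k/(k!)^\nu<\infty$ for every $\nu>0$. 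Hence $\mathbb{P}(W=0)^{-1}\to\sum_{k\ge0}\lambda^k/(k!)^\nu=Z(\lambda,\nu)$, which is (b); combining with (a) gives $\mathbb{P}(W=k)\to\lambda^k/\big((k!)^\nu Z(\lambda,\nu)\big)=\mathbb{P}(X=k)$ for every $k$, and the theorem follows.

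The step I expect to be most delicate is the bookkeeping of powers of $n$ in (a): unlike the classical case $\nu=1$ (Poisson--binomial to Poisson), the total mass $\sum_i q_i^{(n)}$ does not converge — it vanishes for $\nu>1$ and diverges for $\nu<1$ — so one must track precisely how the $n^{k(\nu-1)}$ from $\binom{n}{k}^{\nu-1}$ cancels the $n^{k(1-\nu)}$ produced by $(\sum_i q_i^{(n)})^k$, and check that the repeated-index corrections to $e_k$ are genuinely of lower order for all $\nu>0$. A minor side condition is that the construction requires $\max_{i\le n}\lambda_i<n^\nu$ for the $p_i$ to be probabilities; the computations above additionally use $\max_{i\le n}\lambda_i=o(n^\nu)$ and $\sum_{i\le n}\lambda_i^2=O(n)$, which is the natural regularity on the $\lambda_i$ and holds in particular whenever they are bounded. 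Finally, as noted in Remark~\ref{remark sect4}, a Stein's-method argument does not seem to deliver a rate here, essentially because the Bernoulli summands are strongly dependent (cf. (\ref{eq:ex})); this is why we resort to the direct local-limit computation above.
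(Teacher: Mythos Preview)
Your proposal is correct and follows the same overall strategy as the paper --- pointwise convergence of the mass function --- but the execution differs in two respects worth noting. First, the paper obtains the key limit $n^{-k}\sum_{A\in F_k}\prod_{i\in A}\lambda_i\to\lambda^k/k!$ (its equation~(\ref{eq:lim2})) by invoking the classical $\nu=1$ case (Poisson--binomial $\to$ Poisson) as a black box, whereas you prove the equivalent statement $e_k(q^{(n)})\sim(k!)^{-1}(\sum_i q_i^{(n)})^k$ directly via the multinomial expansion; this makes your argument self-contained and arguably more elementary. Second, the paper treats the normalizing constant informally, noting only that $C_n'$ and $\prod_j(1-p_j)$ do not depend on $k$ and concluding that the limit is proportional to $\lambda^k/(k!)^\nu$; your route through the ratios $\mathbb{P}(W=k)/\mathbb{P}(W=0)$, Maclaurin's inequality for a summable envelope, and dominated convergence to identify $\mathbb{P}(W=0)\to Z(\lambda,\nu)^{-1}$ is more careful on this point. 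One minor comment: the side conditions you flag are slightly stronger than needed --- for the repeated-index error in (a) it suffices that $\sum_{i\le n}\lambda_i^2=o(n^2)$, which already follows from $\max_{i\le n}\lambda_i=o(n)$ (a consequence of the Ces\`aro hypothesis) together with $\sum_{i\le n}\lambda_i=O(n)$; the condition $\max_{i\le n}\lambda_i=o(n^\nu)$ is genuinely needed for $\nu<1$ so that the $p_i$ are probabilities, but the paper's proof implicitly requires the same.
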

\begin{proof}
Firstly, note that in the case $\nu=1$ the result is known.  It is the classical convergence of a sum of independent Bernoulli random variables to a Poisson distribution.  That immediately gives us the limit
\begin{equation}\label{eq:lim1}
\frac{1}{n^k}\sum_{A\in F_k}\prod_{i\in A}\lambda_i\prod_{j\in A^c}\bigg(1-\frac{\lambda_j}{n}\bigg)\rightarrow e^{-\lambda}\frac{\lambda^k}{k!}\,,
\end{equation} 
as $n\rightarrow\infty$.  As a consequence of (\ref{eq:lim1}), we have that
\begin{equation}\label{eq:lim2}
\frac{1}{n^k}\sum_{A\in F_k}\prod_{i\in A}\lambda_i\rightarrow \frac{\lambda^k}{k!}\,,
\end{equation}
as $n\rightarrow\infty$, since 
$$
\lim_{n\rightarrow\infty}\prod_{j\in A^c}\left(1-\frac{\lambda_j}{n}\right)=\lim_{n\rightarrow\infty}\prod_{j=1}^n\left(1-\frac{\lambda_j}{n}\right)\cdot\lim_{n\rightarrow\infty}\prod_{l\in A}\left(1-\frac{\lambda_l}{n}\right)^{-1}=e^{-\lambda}\,.
$$

Now, in the present case we may write the mass function (\ref{eq:cmppb}) in the form
$$
p_{n,\nu}(k;\lambda_1/n^\nu,\ldots,\lambda_n/n^\nu)=\frac{1}{C_n'}\left(\frac{n!}{(n-k)!n^k}\right)^{\nu-1}\frac{1}{(k!)^{\nu-1}}\sum_{A\in F_k}\prod_{i\in A}\frac{\lambda_i}{n}\prod_{j\in A^c}\left(1-\frac{\lambda_j}{n^{\nu}}\right)\,.
$$
Clearly
$$
\lim_{n\rightarrow\infty}\frac{n!}{(n-k)!n^k}=1\,, 
$$
and
$$
\prod_{j\in A^c}\left(1-\frac{\lambda_j}{n^{\nu}}\right)=\prod_{j=1}^n\left(1-\frac{\lambda_j}{n^{\nu}}\right)\prod_{l\in A}\left(1-\frac{\lambda_l}{n^{\nu}}\right)^{-1}\,.
$$
Note that
$$
\lim_{n\rightarrow\infty}\prod_{l\in A}\bigg(1-\frac{\lambda_l}{n^{\nu}}\bigg)^{-1}=1\,,
$$
and that the product
$$
\prod_{j=1}^n\bigg(1-\frac{\lambda_j}{n^{\nu}}\bigg)\,
$$
and the normalizing constant $C_n^\prime$ do not depend on $k$.  Combining these observations with (\ref{eq:lim2}), we have that
$$
\lim_{n\rightarrow\infty}p_{n,\nu}(k;\lambda_1/n^\nu,\ldots,\lambda_n/n^\nu)=\frac{C\lambda^k}{(k!)^\nu}\,,
$$
where $C$ does not depend on $k$.  The result follows.
\end{proof}

\begin{remark}\label{remark sect4} \emph{It would be desirable to extend Theorem \ref{thm:cmppb} to include an explicit bound on the convergence rate, as was achieved in Theorem \ref{thm:stein}.  Such a bound could, in principle, be established by generalising the proof of that theorem.  This approach would require one to obtain a Stein equation for the CMPB distribution, a generalisation of the stochastic ordering result of Lemma \ref{lem:ord2} to the CMPB distribution, and an appropriate extension of the moment estimates of Lemmas \ref{lem:stein1} and \ref{lem:stein2}.   This is a possible direction for future research.}
\end{remark}

\subsection{Mixed CMP distributions}

Finally, we also consider the case of a mixed CMP distribution.  For a non-negative,
real-valued random variable $\xi$, we say that $W\sim\mbox{CMP}(\xi,\nu)$ has a
mixed CMP distribution if
$$
\mathbb{P}(W=j)=\frac{1}{(j!)^\nu}\mathbb{E}\left[\frac{\xi^j}{Z(\xi,\nu)}\right], \quad j\in\mathbb{Z}^+\,.
$$   
We assume throughout that $\xi$ is such that this expectation exists.

Following the proof of Theorem 1.C (for mixed Poisson approximation) in the book by
Barbour, Holst and Janson \cite{bhj92}, we use the characterisation in Lemma
\ref{lem:char} (i), along with the bounds on the solution to the Stein equation
given in Lemma \ref{lem:stein}, to obtain the following.
\begin{theorem}
Let $\xi$ be a non-negative random variable.  Then
$$
d_{TV}(\mbox{CMP}(\xi,\nu),\mbox{CMP}(\lambda,\nu))\leq
g_\nu(\lambda)\mathbb{E}|\xi-\lambda|\,, 
$$
where $g_\nu(\lambda)$ is as defined in Section \ref{sec:stein}.
\end{theorem}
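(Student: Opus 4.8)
The plan is to follow the Stein's method template used for mixed Poisson approximation in Theorem 1.C of Barbour, Holst and Janson \cite{bhj92}, exploiting the $\mbox{CMP}(\lambda,\nu)$ Stein equation (\ref{eq:stein}) together with the uniform bound (\ref{eq:steinfac1}) on its solution. Write $W\sim\mbox{CMP}(\xi,\nu)$ for the mixed variable. The inequality is vacuous when $\mathbb{E}\xi=\infty$, since then $\mathbb{E}|\xi-\lambda|=\infty$; so we may assume $\mathbb{E}\xi<\infty$, and then $\mathbb{E}W^\nu=\mathbb{E}[\mathbb{E}(W^\nu\,|\,\xi)]=\mathbb{E}\xi<\infty$ by Proposition \ref{prop:moments}(i), which takes care of integrability below.

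Fix $A\subseteq\mathbb{Z}^+$ and let $f_A$ be the solution (\ref{eq:steinsolna}) of the $\mbox{CMP}(\lambda,\nu)$ Stein equation, so that $\lVert f_A\rVert\leq g_\nu(\lambda)$ by (\ref{eq:steinfac1}). The key step is to condition on $\xi$: given $\xi=t$, $W$ has the $\mbox{CMP}(t,\nu)$ distribution, so the characterisation in Lemma \ref{lem:char}(i) gives $\mathbb{E}[t f_A(W+1)-W^\nu f_A(W)\,|\,\xi=t]=0$ (applicable because $f_A$ is bounded and every $\mbox{CMP}(t,\nu)$ law has finite moments). Hence, for each $t$,
$$
\mathbb{E}[\lambda f_A(W+1)-W^\nu f_A(W)\,|\,\xi=t]=(\lambda-t)\,\mathbb{E}[f_A(W+1)\,|\,\xi=t]\,,
$$
and taking expectations over $\xi$ yields
$$
\lambda\,\mathbb{E}f_A(W+1)-\mathbb{E}[W^\nu f_A(W)]=\mathbb{E}[(\lambda-\xi)f_A(W+1)]\,.
$$

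Evaluating the Stein equation (\ref{eq:stein}) at $W$ and taking expectations shows that the left-hand side above equals $\mathbb{P}(W\in A)-\mathbb{P}(X\in A)$ with $X\sim\mbox{CMP}(\lambda,\nu)$. Therefore
$$
|\mathbb{P}(W\in A)-\mathbb{P}(X\in A)|\leq\mathbb{E}\big[|\lambda-\xi|\,|f_A(W+1)|\big]\leq\lVert f_A\rVert\,\mathbb{E}|\xi-\lambda|\leq g_\nu(\lambda)\,\mathbb{E}|\xi-\lambda|\,,
$$
and taking the supremum over $A\subseteq\mathbb{Z}^+$ gives the stated bound. The only point needing care is the conditioning argument and the associated integrability (and the trivial reduction to $\mathbb{E}\xi<\infty$); beyond that the proof is essentially immediate, so I do not expect a substantive obstacle here.
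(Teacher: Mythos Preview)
Your proof is correct and follows precisely the approach the paper indicates: condition on $\xi$, apply the $\mbox{CMP}$ characterisation of Lemma \ref{lem:char}(i) to reduce the Stein-equation expectation to $\mathbb{E}[(\lambda-\xi)f_A(W+1)]$, and then invoke the bound $\lVert f_A\rVert\leq g_\nu(\lambda)$ from Lemma \ref{lem:stein}. This is exactly the adaptation of Theorem 1.C of \cite{bhj92} that the paper has in mind, and your handling of the integrability (reducing to $\mathbb{E}\xi<\infty$ and noting $\mathbb{E}[W^\nu\mid\xi]=\xi$) is appropriate.
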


\subsection*{Acknowledgements}
FD is grateful to James Cruise for helpful discussions, and RG would like to thank Steven Gillispie for a helpful discussion on the asymptotics of the CMP normalising constant.  Both authors would like to thank Sebastian Vollmer for assistance with the numerical work of Remark \ref{numrem}.  RG is supported by EPSRC research grant EP/K032402/1.  RG is also grateful to Heriot-Watt University and EPSRC for funding a visit to Heriot-Watt, where many of the details of this project were worked out.  Finally, we would like to thank the referees for their comments and suggestions.

\end{document}